\documentclass[pdftex,reqno]{amsart}
\usepackage{amssymb,amsthm,amsmath}
\usepackage{cases}
\usepackage{graphicx,xcolor}
\usepackage{comment}

\theoremstyle{plain}
\newtheorem{theorem}{Theorem}[section]
\newtheorem{lemma}[theorem]{Lemma}
\newtheorem{corollary}[theorem]{Corollary}

\theoremstyle{definition}
\newtheorem{definition}[theorem]{Definition}

\newtheorem{remark}[theorem]{Remark}
\newtheorem{assumption}[theorem]{Assumption}

\numberwithin{equation}{section}

\title{Convergence of Levenberg--Marquardt method in Banach spaces}
\author{Akari Ishida and Sei Nagayasu}
\thanks{Keywords: Inverse problems, Ill-posed operator equations, Regularization method, Levenberg--Marquardt method, 47J25, 65J22, 35R30}

\begin{document} 
\begin{abstract}
We study the Levenberg--Marquardt method for nonlinear inverse problems in Banach spaces under a H\"{o}lder stability estimate. Under suitable assumptions on the geometry of the Banach space and the Fr\'{e}chet derivative of the measurement operator, we establish the well-definedness of the method, local convergence and convergence rates for exact data, and the monotonic decrease of the Bregman distance up to the stopping index for noisy data. These results extend the corresponding Hilbert space theory to the Banach space setting. 
\end{abstract} \maketitle
\section{Introduction}
We consider nonlinear inverse problems of the form
\begin{equation} \label{NOE}
F(x)=y,\quad x\in\mathcal{D}(F),\, y\in Y,
\end{equation} 
with the domain $\mathcal{D}(F)\subset X$. Here, $X$ and $Y$ are Banach spaces. Determining the unknown parameter~$x$ from the measurement data~$y$ is generally ill-posed, and regularization methods are used to obtain stable approximations. In this paper, we employ the Levenberg--Marquardt method (LM-method) as an iterative regularization method and study its convergence for a class of inverse problems satisfying a H\"{o}lder stability estimate. One of our main results is that, under our assumptions, the sequence generated by the LM-method is well-defined. Since nonlinear operator equations may have multiple solutions, convergence generally depends on the initial guess and is therefore local. 
For exact data, we prove local convergence and derive convergence rates. For noisy data, we establish the monotonic decrease of the Bregman distance up to the stopping index and finite termination of Morozov's discrepancy principle. The precise statements are given in Theorems~\ref{thm:exact} and \ref{thm:noisy}, respectively.

Extensive research has been carried out to study the convergence of iterative regularization methods for nonlinear inverse problems. The tangential cone condition~(TCC) is a standard assumption for their convergence analysis, see Kaltenbacher--Neubauer--Scherzer~\cite{KNS}, Schuster--Kaltenbacher--Hofmann--Kazimierski~\cite{SKHK} and the references therein. While the TCC is used to establish convergence, additional assumptions, such as source conditions, are generally required for convergence rates. Convergence of the Landweber iteration was established by Hanke--Neubauer--Scherzer~\cite{HNS} in Hilbert spaces, and extended to Banach spaces by Kaltenbacher--Sch\"{o}pfer--Schuster~\cite{KSS}. Local convergence of the LM-method under the TCC was established by Hanke~\cite{Hanke} in Hilbert spaces and by Jin~\cite{Jin} in Banach spaces. 

An alternative approach uses a H\"{o}lder stability estimate. For the Landweber iteration, de Hoop--Qiu--Scherzer~\cite{dHQS} and Mittal--Giri~\cite{MG} established convergence and convergence rates under a H\"{o}lder stability estimate. Corresponding results for the LM-method in Hilbert spaces were obtained in Ishida--Nagayasu--Nakamura~\cite{INN}. A H\"{o}lder stability estimate thus allows convergence and convergence rates to be obtained under a single condition. Moreover, together with the Lipschitz continuity of the Fr\'{e}chet derivative~$F'$, the H\"{o}lder stability estimate implies the TCC. In this paper, we extend the analysis of \cite{INN} from Hilbert spaces to Banach spaces. The use of a H\"{o}lder stability estimate is also motivated by the fact that stability estimates can arise naturally in the study of inverse problems. Indeed, although inverse problems are generally ill-posed, a H\"{o}lder stability estimate can sometimes be obtained by imposing suitable a priori assumptions on the unknown parameters. 

This paper is organized as follows. Section~\ref{section:preliminaries} recalls some basic notions and properties concerning the geometry of Banach spaces and Bregman distances. Section~\ref{sec:preliminaries;LM} introduces the LM-method and establishes its basic properties. Sections~\ref{section:exact} and~\ref{section:noisy} present the convergence analysis for exact and noisy data, respectively.

\section{Geometry of Banach spaces} \label{section:preliminaries}
In this section, we collect some basic notions and results on Banach spaces.
\subsection{Duality mappings} \label{Preliminaries;DM}
We begin by introducing the duality mapping.
\begin{definition}[duality mapping] 
Let $X$ be a Banach space with dual $X^{\ast}$. The set-valued mapping~$J_{p}: X\rightarrow 2^{X^{\ast}}$ with $p\geq 1$ defined by
\[
J_{p}(x)=\left\{x^{\ast}\in X^{\ast}\mid \langle x^{\ast},x\rangle_{X^{\ast}, X}=\|x\|_{X}\|x^{\ast}\|_{X^{\ast}},\|x^{\ast}\|_{X^{\ast}}=\|x\|_{X}^{p-1}\right\}
\]
is called the duality mapping of $X$ with gauge function $t\mapsto t^{p-1}$.
\end{definition}

In general, the duality mapping is set-valued. We introduce the notions of $p$-convexity and smoothness of Banach spaces needed in our analysis.
\begin{definition}[\textup{\cite[\S2.3.1]{SKHK}}] \label{def:convex;smooth}
A Banach space $X$ is said to be
\begin{enumerate} \renewcommand{\labelenumi}{(\alph{enumi})} \setlength{\leftskip}{-0.15in}
\item $p$-convex if there exists a constant $C_{p}>0$ such that
\[
\frac{1}{p}\|x-\widetilde{x}\|_{X}^{p}\geq\frac{1}{p}\|x\|_{X}^{p}-\langle j_{p}(x), \widetilde{x}\rangle_{X^{\ast}, X}+\frac{C_{p}}{p}\|\widetilde{x}\|_{X}^{p}
\]
for all $x, \widetilde{x}\in X$ and all $j_{p} (x) \in J_{p} (x)$;
\item uniformly convex if $\delta_{X} ( \epsilon ) > 0$ for any $\epsilon \in ( 0, 2 ]$, where we define
\[
\delta_{X}(\epsilon)=\inf\left\{1-\frac{1}{2}\|x+\widetilde{x}\|_{X}\,\middle|\, x, \widetilde{x}\in X, \|x\|_{X}=\|\widetilde{x}\|_{X}=1, \|x-\widetilde{x}\|_{X}\geq\epsilon\right\} ; 
\] 
\item strictly convex if $\|\frac{1}{2}(x+\widetilde{x})\|_{X}<1$ for all $x, \widetilde{x} \in X$ with $\|x\|_{X}=\|\widetilde{x}\|_{X}=1$ and $x \neq \widetilde{x}$; 
\item smooth if for every $x\in X$ with $x\neq 0$ there is a unique $x^{\ast}\in X^{\ast}$ such that $\|x^{\ast}\|_{X^{\ast}}=1$ and $\langle x^{\ast}, x\rangle_{X^{\ast}, X}=\|x\|_{X}$.
\end{enumerate}
\end{definition}
\begin{remark}\label{remark:convex:smooth:s}
Every $p$-convex Banach space is uniformly convex and hence strictly convex. Every Hilbert space is $2$-convex with $C_{2}=1$, and its duality mapping~$J_{2}$ is the identity. Moreover, there is no nontrivial $p$-convex Banach space~$X$ for $p < 2$. 
\end{remark}

We recall some fundamental properties concerning the relationship between the geometry of Banach spaces and the duality mapping.
\begin{lemma}[\textup{\cite[Theorem~2.53]{SKHK}}] \label{thm:DM;SKHK}
The following statements hold true.
\begin{enumerate} \renewcommand{\labelenumi}{(\alph{enumi})} \setlength{\leftskip}{-0.15in}
\item For every $x\in X$, the set $J_{p}(x)$ is non-empty and convex.
\item For $p>1$, $X$ is smooth if and only if every duality mapping~$J_{p}$ is single-valued.
\item $X$ is strictly convex if and only if every duality mapping~$J_{p}$ is strictly monotone, that is, $\langle x^{\ast}-\widetilde{x}^{\ast},x-\widetilde{x}\rangle_{X^{\ast},X}>0$ for all $x,\widetilde{x}\in X$ with $x\neq \widetilde{x}$ and $x^{\ast}\in J_{p}(x)$, $\widetilde{x}^{\ast}\in J_{p}(\widetilde{x})$.
\end{enumerate}
\end{lemma}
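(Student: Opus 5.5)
Part (a) comes from Hahn--Banach and a direct check of convexity; (b) and (c) reduce the duality mapping $J_p$ to the normalized one. The statement is cited from \cite[Theorem~2.53]{SKHK}, but I would reprove it as follows.

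For (a), the case $x=0$ is immediate: $J_p(0)=\{0\}$ for $p>1$, and $J_p(0)$ is the closed dual unit ball for $p=1$. Both sets are nonempty and convex. For $x\neq 0$, the Hahn--Banach theorem gives $f\in X^{\ast}$ with $\|f\|_{X^{\ast}}=1$ and $\langle f,x\rangle_{X^{\ast},X}=\|x\|_{X}$. Then $\|x\|_{X}^{p-1}f\in J_p(x)$, so $J_p(x)\neq\emptyset$. For convexity, I would rewrite
\[
J_p(x)=\{x^{\ast}\mid \|x^{\ast}\|_{X^{\ast}}\le \|x\|_X^{p-1},\ \langle x^{\ast},x\rangle_{X^{\ast},X}\ge \|x\|_X^{p}\}.
\]
The inclusion $\subset$ is clear. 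For $\supset$, the chain $\|x\|^p\le\langle x^{\ast},x\rangle\le\|x^{\ast}\|\|x\|\le\|x\|^p$ forces equalities everywhere. The right-hand side is the intersection of a ball and a half-space, hence convex.

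For (b), the same normalization shows that $x^{\ast}\mapsto x^{\ast}/\|x\|^{p-1}$ is a bijection between $J_p(x)$ and the set of norming functionals in Definition~\ref{def:convex;smooth}(d), for $x\neq0$. When $p>1$ we have $J_p(0)=\{0\}$, so single-valuedness at $0$ is automatic. Hence $J_p$ is single-valued exactly when every nonzero $x$ has a unique norming functional, which is smoothness. Since the choice of $p>1$ played no role, this holds for every such $J_p$.

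For (c), I expect the main work to be the direction ``strictly convex $\Rightarrow$ strictly monotone''. First I would compute
\[
\langle x^{\ast}-\widetilde x^{\ast},x-\widetilde x\rangle\ge \|x\|^p+\|\widetilde x\|^p-\|x\|^{p-1}\|\widetilde x\|-\|\widetilde x\|^{p-1}\|x\|=(\|x\|^{p-1}-\|\widetilde x\|^{p-1})(\|x\|-\|\widetilde x\|)\ge0.
\]
This gives monotonicity for $p\ge1$. Suppose equality holds with $x\neq\widetilde x$.
\begin{itemize}
\item For $p>1$, the last factor product forces $\|x\|=\|\widetilde x\|=:r$. If $r=0$ then $x=\widetilde x=0$, a contradiction, so $r>0$.
\item For $p=1$, I would argue via the equality case below directly.
\end{itemize}
Equality in the first inequality gives $\langle x^{\ast},\widetilde x\rangle=\|x^{\ast}\|\|\widetilde x\|$ and $\langle \widetilde x^{\ast},x\rangle=\|\widetilde x^{\ast}\|\|x\|$. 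Then $x^{\ast}/\|x^{\ast}\|$ attains the value $1$ on both $x/r$ and $\widetilde x/r$, hence also on their midpoint. So $\|\tfrac12(x+\widetilde x)/r\|=1$, which contradicts strict convexity.

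Conversely, assume strict monotonicity and suppose $\|x\|=\|\widetilde x\|=1$, $x\neq\widetilde x$, with $\|\tfrac12(x+\widetilde x)\|=1$. Take $f\in J_p(\tfrac12(x+\widetilde x))$. Then $\langle f,\tfrac12(x+\widetilde x)\rangle=1$ and $\|f\|=1$. Since $\langle f,x\rangle\le1$ and $\langle f,\widetilde x\rangle\le1$ average to $1$, both equal $1$. Therefore $f\in J_p(x)\cap J_p(\widetilde x)$, and taking $x^{\ast}=\widetilde x^{\ast}=f$ gives $\langle x^{\ast}-\widetilde x^{\ast},x-\widetilde x\rangle=0$, a contradiction. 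This settles (c).

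The main obstacle is bookkeeping in the equality cases, especially for $p=1$, where norms need not match. There I would handle zero vectors separately and otherwise reduce to the unit sphere by scaling, noting that $J_1$ is homogeneous of degree $0$.
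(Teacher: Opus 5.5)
The paper does not prove this lemma; it only cites \cite[Theorem~2.53]{SKHK}. Your argument is the standard one, and it is correct for $p>1$. That is the only case the paper needs, since there $p\geq 2$.
\begin{itemize}
\item In (a), the ball--half-space description gives convexity.
\item In (b), the rescaling $x^{\ast}\mapsto x^{\ast}/\|x\|_{X}^{p-1}$ identifies $J_{p}(x)$ with the norming functionals of a nonzero $x$.
\item In (c), the factorization $(\|x\|_{X}^{p-1}-\|\widetilde{x}\|_{X}^{p-1})(\|x\|_{X}-\|\widetilde{x}\|_{X})$ and the strict monotonicity of $t\mapsto t^{p-1}$ force $\|x\|_{X}=\|\widetilde{x}\|_{X}>0$. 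The midpoint argument then contradicts strict convexity.
\item The converse in (c), via $f\in J_{p}(\tfrac12(x+\widetilde{x}))$, is also fine.
\end{itemize}

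What you should remove is the claim that $p=1$ in (c) is just bookkeeping: it is false. The $0$-homogeneity you invoke gives $J_{1}(tx)=J_{1}(x)$ for $t>0$. Take any $x\neq 0$, set $\widetilde{x}=2x$, and choose $x^{\ast}=\widetilde{x}^{\ast}\in J_{1}(x)$. Then $\langle x^{\ast}-\widetilde{x}^{\ast},x-\widetilde{x}\rangle_{X^{\ast},X}=0$ with $x\neq\widetilde{x}$, so $J_{1}$ is never strictly monotone on a nontrivial space, strictly convex or not. Your own equality analysis shows where it breaks. For $p=1$ the norms need not match, so strict convexity only yields $x/\|x\|_{X}=\widetilde{x}/\|\widetilde{x}\|_{X}$, not $x=\widetilde{x}$. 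Statement (c) has to be read with $p>1$.

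A related point concerns (a) at $p=1$, $x=0$. Calling $J_{1}(0)$ the closed dual unit ball is the convention $J_{1}=\partial\|\cdot\|_{X}$, not a consequence of the displayed definition. That definition requires $\|x^{\ast}\|_{X^{\ast}}=\|0\|_{X}^{0}$. With $0^{0}=1$ it gives the unit sphere of $X^{\ast}$, which is not convex. So that edge case of (a) rests on convention, not proof.
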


Note that if $X$ is smooth, then $J_{p}$ is single-valued for $p>1$ by Lemma~\ref{thm:DM;SKHK}~(b). Hence, throughout this paper, whenever the underlying Banach space is smooth, we use $J_{p}$ itself instead of a single-valued selection~$j_{p}$.

\subsection{Bregman distances} \label{sec:preliminaries;BD}
Due to the geometric properties of Banach spaces, the Bregman distance provides a natural tool for the convergence analysis of regularization methods.

\begin{definition}[Bregman distance] \label{def:BD}
Let $X$ be a smooth Banach space and $p>1$. Then, the functional
\begin{equation} \label{BD}
D_{{p}}(x, \widetilde{x})=\frac{1}{p}\|x\|_{X}^{p}-\frac{1}{p}\|\widetilde{x}\|_{X}^{p}-\langle J_{p}(\widetilde{x}), x-\widetilde{x}\rangle_{X^{\ast},  X},\quad x, \widetilde{x}\in X,
\end{equation}
is called the Bregman distance with respect to the functional $\frac{1}{p}\|\cdot\|_{X}^{p}$.
\end{definition}
\begin{remark}
Since the Bregman distance is not symmetric in general, we note that the definition adopted in \cite{dHQS,MG} differs from Definition~\ref{def:BD} by interchanging the roles of $x$ and $\widetilde{x}$. On the other hand, \cite{Jin} adopts the same definition as Definition~\ref{def:BD}, although the Bregman distance is denoted by $\Delta_{p}(x,\widetilde{x})$. 
\end{remark}

It follows from \eqref{BD} that 
\begin{align} \label{Bregman;identity}
&D_{p}(w, u)-D_{p}(w, v) \nonumber \\
&=-D_{p}(u, v)+\langle J_{p}(u)-J_{p}(v), u-w\rangle_{X^{\ast},  X},\quad u, v, w\in X.
\end{align}

The following properties of the Bregman distance are well-known. See, for example, \cite[Theorem 2.60]{SKHK} and the references therein.
\begin{lemma} \label{thm:BD;SKHK}
Let $X$ be a smooth Banach space and $p>1$. Then, the following properties are valid:
\begin{enumerate} \renewcommand{\labelenumi}{(\alph{enumi})} \setlength{\leftskip}{-0.15in}
\item $D_{p}(x, \widetilde{x})\geq 0$ for all $x, \widetilde{x}\in X$.
\item If $X$ is also strictly convex, then $D_{p}(x, \widetilde{x})=0$ if and only if $x=\widetilde{x}$.
\item If $X$ is also uniformly convex, then $\| x_{n} - x \|_{X} \rightarrow 0$ if and only if $D_{p} ( x_{n} , x ) \rightarrow 0$. 
\item $X$ is $p$-convex if and only if $D_{p}(x, \widetilde{x})\geq\frac{C_{p}}{p}\|x-\widetilde{x}\|_{X}^{p}$ for all $x, \widetilde{x}\in X$, where $C_{p}$ is the constant in Definition~\ref{def:convex;smooth}~(a).
\end{enumerate}
\end{lemma}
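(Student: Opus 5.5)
The plan is to derive all four statements from one fact: the function $f(x)=\frac1p\|x\|_X^p$ is convex and Gâteaux differentiable on a smooth space, with derivative $J_p$. In other words, $J_p(\widetilde x)$ is the subgradient of $f$ at $\widetilde x$, so
\[
D_p(x,\widetilde x)=f(x)-f(\widetilde x)-\langle f'(\widetilde x),x-\widetilde x\rangle
\]
is the remainder of a first-order Taylor expansion.

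For (a), I would expand the pairing using $\langle J_p(\widetilde x),\widetilde x\rangle=\|\widetilde x\|^p$. This gives
\[
D_p(x,\widetilde x)=\frac1p\|x\|^p+\Big(1-\frac1p\Big)\|\widetilde x\|^p-\langle J_p(\widetilde x),x\rangle .
\]
Then $|\langle J_p(\widetilde x),x\rangle|\le \|\widetilde x\|^{p-1}\|x\|$, and Young's inequality with exponents $p$ and $p/(p-1)$ gives $\|\widetilde x\|^{p-1}\|x\|\le \frac1p\|x\|^p+\frac{p-1}{p}\|\widetilde x\|^p$. Hence $D_p\ge 0$.

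For (b), equality forces equality in both estimates. Equality in Young's inequality gives $\|x\|=\|\widetilde x\|$. Equality in the pairing gives $\langle J_p(\widetilde x),x\rangle=\|\widetilde x\|^{p-1}\|x\|$. If $\widetilde x=0$, then $\|x\|=0$ and we are done. Otherwise, both $J_p(x)/\|x\|^{p-1}$ and $J_p(\widetilde x)/\|\widetilde x\|^{p-1}$ are norming functionals at $x$. Smoothness makes the norming functional unique, so $J_p(x)=J_p(\widetilde x)$. Strict monotonicity from Lemma~\ref{thm:DM;SKHK}~(c) then yields $x=\widetilde x$. As an alternative route, strict convexity of $f$ on a strictly convex space directly gives that the Taylor remainder vanishes only on the diagonal.

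For (d), I would substitute $x\mapsto\widetilde x$ and $\widetilde x\mapsto\widetilde x-x$ in Definition~\ref{def:convex;smooth}~(a), in the direction given by the "if" case. This becomes
\[
\frac1p\|x\|^p\ge\frac1p\|\widetilde x\|^p-\langle J_p(\widetilde x),\widetilde x-x\rangle+\frac{C_p}{p}\|x-\widetilde x\|^p ,
\]
which is exactly $D_p(x,\widetilde x)\ge\frac{C_p}p\|x-\widetilde x\|^p$. The computation is reversible, so the equivalence follows.

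For (c), the easy direction is: if $x_n\to x$, then $D_p(x_n,x)\to0$. This holds because $\|x_n\|^p\to\|x\|^p$ and $\langle J_p(x),x_n-x\rangle\to0$.

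The converse is the main obstacle. First, I would show that $\{x_n\}$ is bounded, since by the Young-type bound $D_p(x_n,x)$ grows like $\frac1p\|x_n\|^p$. Next, I would use the modulus of convexity to obtain a quantitative lower bound $D_p(y,x)\ge\psi(\|y-x\|)$ on bounded sets, with $\psi$ increasing and $\psi(t)>0$ for $t>0$. This is the standard fact that $\frac1p\|\cdot\|^p$ is uniformly convex on bounded sets of a uniformly convex space, and I would cite \cite[Theorem 2.60]{SKHK} or Xu's inequalities for it. Then $D_p(x_n,x)\to0$ forces $\|x_n-x\|\to0$.

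A more elementary route for this converse also works. Write $D_p(x_n,x)\to0$ in the expanded form above. Pass to subsequences with $\|x_n\|\to r$. The two-sided squeeze in Young's inequality gives $r=\|x\|$ and $\langle J_p(x),x_n\rangle\to\|x\|^p$. If $x\neq0$, these say that the normalized $x_n$ and $x$ nearly attain the norming functional, so their midpoint has norm close to $1$. Uniform convexity then gives $x_n/\|x_n\|\to x/\|x\|$, and hence $x_n\to x$. If $x=0$, the conclusion $\|x_n\|\to0$ is immediate. Since every subsequence has a further subsequence converging to $x$, the whole sequence converges.
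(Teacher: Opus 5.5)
Your proposal is correct. It differs from the paper mainly in that the paper gives no argument at all: it calls the lemma well known and defers to \cite[Theorem~2.60]{SKHK} and the references there. You instead derive each item from the definitions.

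\begin{itemize}
\item Part (a) reduces to the expansion $D_p(x,\widetilde x)=\frac1p\|x\|^p+\frac{p-1}{p}\|\widetilde x\|^p-\langle J_p(\widetilde x),x\rangle$ together with Young's inequality.
\item Part (d) is the bijective change of variables $(x,\widetilde x)\mapsto(\widetilde x,\widetilde x-x)$ in Definition~\ref{def:convex;smooth}~(a). It also shows that the same constant $C_p$ appears on both sides.
\item Part (b) comes from the equality cases of those same two estimates, combined with uniqueness of the norming functional (smoothness) and strict monotonicity of $J_p$.
\item For part (c), your first route (boundedness plus Xu's inequality on bounded sets) still rests on a citation. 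Your second route is complete and uses only $\delta_X(\epsilon)>0$. It forces both nonnegative pieces of $D_p(x_n,x)$ to zero, which gives $\|x_n\|\to\|x\|$ and $\langle J_p(x),x_n\rangle\to\|x\|^p$. It then applies the modulus of convexity to the normalized vectors and finishes with a subsequence argument.
\end{itemize}

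The citation buys brevity. Your version gives a self-contained proof that shows exactly where smoothness, strict convexity and uniform convexity enter.

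Two cosmetic points. In (d), substituting into the definition of $p$-convexity gives the \emph{only if} direction ($p$-convexity implies the Bregman bound), not the ``if'' direction as you wrote. Your remark that the substitution is reversible does cover both directions. In (b), you should also record the trivial converse $D_p(x,x)=0$.
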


\section{LM-method} \label{sec:preliminaries;LM}
In this section, we introduce the LM-method used in the subsequent analysis. Let $X, Y$ be real Banach spaces. We consider the nonlinear operator equation~\eqref{NOE}. Here, $x\in X$ is the unknown parameter and $y\in Y$ is the measurement data, and $F$ is the measurement operator. Let $x^{\dagger}$ generate the exact data~$y$, that is, $F(x^{\dagger})=y$. We also consider noisy data~$y^{\delta}$ satisfying
\begin{equation} \label{noiselevel}
\|y^{\delta}-y\|_{Y}\leq\delta.
\end{equation}

Starting with an initial guess $x_{0}^{\delta}=x_{0}$, the LM-method constructs the iterates as follows. Given the current iterate $x_{k}^{\delta}$, for each $\alpha>0$, consider the functional
\begin{equation} \label{prepLM}
\Phi_{\alpha}(x)=\frac{1}{r}\|y^{\delta}-F(x_{k}^{\delta})-F'(x_{k}^{\delta})(x-x_{k}^{\delta})\|_{Y}^{r}+\alpha D_{p}(x, x_{k}^{\delta}).
\end{equation}
We denote a minimizer of $\Phi_{\alpha}$ by $x_{k}(\alpha,y^{\delta})$. Then, $x_{k}(\alpha,y^{\delta})$ satisfies
\begin{align} \label{LM}
&\alpha(J_{p}(x_{k}(\alpha, y^{\delta}))-J_{p}(x_{k}^{\delta})) \nonumber \\
&\quad=F'(x_{k}^{\delta})^{\ast}J_{r}(y^{\delta}-F(x_{k}^{\delta})-F'(x_{k}^{\delta})(x_{k}(\alpha, y^{\delta})-x_{k}^{\delta})),
\end{align}
where $J_{r}: Y\to Y^{\ast}$ is the duality mapping. Here, \eqref{LM} corresponds to the LM-method for \eqref{NOE} in Banach spaces. For each $\alpha>0$, the minimizer $x_{k}(\alpha,y^{\delta})$ is uniquely determined and satisfies \eqref{LM}, see Lemma~\ref{lem:Jin31:alpha}. For some $0<\mu<1$, we define $\alpha_{k}(y^{\delta})>0$ to be the root of the equation
\begin{equation} \label{mu}
\left\|y^{\delta}-F(x_{k}^{\delta})-F'(x_{k}^{\delta})(x_{k}(\alpha,y^{\delta})-x_{k}^{\delta})\right\|_{Y}=\mu\left\|y^{\delta}-F(x_{k}^{\delta})\right\|_{Y}
\end{equation}
and write $\alpha_{k}=\alpha_{k}(y^{\delta})$ and $\psi_{k}^{\delta}(\alpha)=\left\|y^{\delta}-F(x_{k}^{\delta})-F'(x_{k}^{\delta})(x_{k}(\alpha,y^{\delta})-x_{k}^{\delta})\right\|_{Y}$. Once $\alpha_{k}>0$ is determined, we set $x_{k+1}^{\delta}=x_{k}(\alpha_{k}, y^{\delta})$. The well-definedness of $\alpha_{k}$ under \eqref{F;inv-Holder} follows from Corollay~\ref{cor:Jin33:alpha:noisy}. For noisy data, the iteration is terminated by Morozov's discrepancy principle at $k_{\ast}=k_{\ast}(\delta, y^{\delta})$ satisfying
\begin{equation} \label{MDP}
\|y^{\delta}-F(x_{k_{\ast}}^{\delta})\|_{Y}\leq\tau\delta<\|y^{\delta}-F(x_{k}^{\delta})\|_{Y},\quad 0\leq k<k_{\ast},
\end{equation}
where $\tau>1$. For exact data, we omit the superscript~$\delta$ and ignore the discrepancy principle. 
When $x_{k}$ has been defined, we define $\alpha_{k}$ by \eqref{mu}, and define $x_{k+1}$ by \eqref{LM} unless $F( x_{k} ) = y$. On the other hand, if $F(x_{k})=y$, then the H\"{o}lder stability estimate~\eqref{F;inv-Holder} and Lemma~\ref{thm:BD;SKHK}~(b) imply that $x_{k}=x^{\dagger}$, that is, the solution has been reached, and we set $x_{k+1}=x_{k}$. 

\subsection{Assumptions} \label{section:preliminaries;prep}
We make the following assumptions on the spaces~$X, Y$, the operator~$F$ and the initial guess~$x_{0}$.
\begin{assumption} \label{asm:XY} \mbox{} 
\begin{enumerate} \renewcommand{\labelenumi}{(\alph{enumi})} \setlength{\leftskip}{-0.1in}
\item $X$ is a nontrivial $p$-convex and smooth real Banach space.
\item $Y$ is a smooth real Banach space. 
\end{enumerate}
\end{assumption}
We note that $p \geq 2$ holds under Assumption~\ref{asm:XY} (a) by Remark~\ref{remark:convex:smooth:s}. 
\begin{assumption} \label{asm:op}
Let $B_{\rho}(x^{\dagger},D_{p})=\{x\in X \mid D_{p}(x^{\dagger}, x)\leq\rho\}\subset\mathcal{D}(F)$.
\begin{enumerate} \renewcommand{\labelenumi}{(\alph{enumi})} \setlength{\leftskip}{-0.1in}
\item The Fr\'{e}chet derivative $F'(\cdot)$ is Lipschitz continuous on $B_{\rho}(x^{\dagger},D_{p})$ such that
\begin{equation} \label{F'-Lip}
\|F'(x)-F'(\widetilde{x})\|_{\mathcal{L}(X, Y)}\leq L\|x-\widetilde{x}\|_{X},\quad x, \widetilde{x}\in B_{\rho}(x^{\dagger},D_{p})
\end{equation}
and
\begin{equation} \label{F'-bounded}
\|F'(x)\|_{\mathcal{L}(X, Y)}\leq\widehat{L},\quad x\in B_{\rho}(x^{\dagger},D_{p}),
\end{equation}
where $\|\cdot\|_{\mathcal{L}(X,Y)}$ denotes the operator norm for bounded operators from $X$ to $Y$, with a Lipschitz constant $L>0$ and a constant $\widehat{L}>0$.
\item Equation~\eqref{NOE} has the uniform H\"{o}lder stability estimate given as
\begin{equation} \label{F;inv-Holder}
D_{p}(x, \widetilde{x})\leq C_{F}^{p}\|F(x)-F(\widetilde{x})\|_{Y}^{\frac{1+\varepsilon}{2}p},\quad x, \widetilde{x}\in B_{\rho}(x^{\dagger},D_{p})
\end{equation}
for some $C_{F}>0$ and $\varepsilon\in(0, 1]$.
\end{enumerate}
\end{assumption}
\begin{assumption} \label{asm:x0} \mbox{}
\begin{enumerate} \renewcommand{\labelenumi}{(\alph{enumi})} \setlength{\leftskip}{-0.1in}
\item There exists a solution~$x^{\dagger}$ to \eqref{NOE}. 
\item The initial guess $x_{0}$ satisfies $x_{0}\in B_{\rho}(x^{\dagger},D_{p})$.
\end{enumerate}
\end{assumption}

\subsection{Preliminary estimates} \label{section:preliminaries;prep1}
We prepare several lemmas for the subsequent analysis. We first recall the following result on $x_{k}(\alpha,y^\delta)$. While the uniform smoothness of $X$ is assumed in \cite{Jin}, the smoothness of $X$ is sufficient for the result below.
\begin{lemma}[\textup{\cite[Lemma~3.1]{Jin}}] \label{lem:Jin31:alpha}
Let $X$ be a smooth and uniformly convex real Banach space and $p, r>1$. Suppose that Assumptions~\ref{asm:XY}~(b) and \ref{asm:op}~(a) hold. Then, for each $\alpha>0$, there exists a unique $x_{k}(\alpha, y^{\delta})$ satisfying \eqref{LM}. Moreover, the map $\alpha\mapsto x_{k}(\alpha,y^{\delta})$ is continuous on $(0, \infty)$, and $\psi_{k}^{\delta}$ is continuous and nondecreasing on $(0,\infty)$.
\end{lemma}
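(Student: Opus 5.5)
Write $A=F'(x_{k}^{\delta})$, $b=y^{\delta}-F(x_{k}^{\delta})$ and $\phi(x)=\frac{1}{r}\|b-A(x-x_{k}^{\delta})\|_{Y}^{r}$, so that $\Phi_{\alpha}=\phi+\alpha D_{p}(\cdot,x_{k}^{\delta})$. The plan is to obtain $x_{k}(\alpha,y^{\delta})$ as the unique minimizer of $\Phi_{\alpha}$, identify \eqref{LM} with its first-order optimality condition, and deduce continuity and monotonicity by comparing minimizers for different $\alpha$.

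\textbf{Existence.} $\Phi_{\alpha}$ is convex and continuous on $X$. It is coercive because $\phi\ge0$ and $D_{p}(x,x_{k}^{\delta})\ge\frac1p\|x\|^{p}-\frac1p\|x_{k}^{\delta}\|^{p}-\|x_{k}^{\delta}\|^{p-1}(\|x\|+\|x_{k}^{\delta}\|)\to\infty$. By the Milman--Pettis theorem, uniform convexity makes $X$ reflexive. A convex, continuous, coercive functional on a reflexive space attains its minimum, since it is weakly lower semicontinuous.

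\textbf{Uniqueness and equivalence with \eqref{LM}.} Uniform convexity implies strict convexity, so $\frac1p\|\cdot\|^{p}$ is strictly convex and $\Phi_{\alpha}$ has at most one minimizer. Smoothness of $X$ and $Y$ makes $\frac1p\|\cdot\|_{X}^{p}$ and $\frac1r\|\cdot\|_{Y}^{r}$ Gâteaux differentiable with gradients $J_{p}$ and $J_{r}$. The chain rule then shows that the optimality condition $0=\nabla\Phi_{\alpha}(x)$ is exactly \eqref{LM}. Since $\Phi_{\alpha}$ is convex, every solution of \eqref{LM} is a minimizer. Hence \eqref{LM} has exactly one solution. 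This is the only point where smoothness, rather than uniform smoothness, of $X$ is needed. Jin uses uniform smoothness to obtain norm-to-norm continuity of $J_{p}$, which we avoid below.

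\textbf{Monotonicity of $\psi_{k}^{\delta}$.} Let $\alpha<\beta$ with minimizers $x_{\alpha},x_{\beta}$, and set $D(\cdot)=D_{p}(\cdot,x_{k}^{\delta})$. Minimality gives
\[
\phi(x_{\alpha})+\alpha D(x_{\alpha})\le\phi(x_{\beta})+\alpha D(x_{\beta}),\qquad \phi(x_{\beta})+\beta D(x_{\beta})\le\phi(x_{\alpha})+\beta D(x_{\alpha}).
\]
Adding the two inequalities yields $(\beta-\alpha)(D(x_{\beta})-D(x_{\alpha}))\le0$, so $D(x_{\beta})\le D(x_{\alpha})$. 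Substituting this back into the first inequality gives $\phi(x_{\alpha})\le\phi(x_{\beta})$, and therefore $\psi_{k}^{\delta}(\alpha)\le\psi_{k}^{\delta}(\beta)$.

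\textbf{Continuity of $\alpha\mapsto x_{\alpha}$ (main obstacle).} Let $\alpha_{n}\to\alpha>0$.
\begin{itemize}
\item Comparing $\Phi_{\alpha_{n}}(x_{\alpha_{n}})\le\Phi_{\alpha_{n}}(x_{k}^{\delta})=\phi(x_{k}^{\delta})$ shows that $D(x_{\alpha_{n}})$ is bounded, hence $(x_{\alpha_{n}})$ is bounded.
\item By reflexivity, any subsequence has a further subsequence converging weakly to some $\bar x$.
\item By weak lower semicontinuity, for every $x\in X$,
\[
\Phi_{\alpha}(\bar x)\le\liminf\Phi_{\alpha_{n}}(x_{\alpha_{n}})\le\limsup\Phi_{\alpha_{n}}(x)=\Phi_{\alpha}(x).
\]
So $\bar x=x_{\alpha}$ by uniqueness, and the whole sequence converges weakly.
\item Taking $x=x_{\alpha}$ in the chain above gives $\Phi_{\alpha_{n}}(x_{\alpha_{n}})\to\Phi_{\alpha}(x_{\alpha})$. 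Since $\phi$ and $D$ are separately weakly lower semicontinuous, $\phi(x_{\alpha_{n}})\to\phi(x_{\alpha})$ and $D(x_{\alpha_{n}})\to D(x_{\alpha})$.
\item The term $\langle J_{p}(x_{k}^{\delta}),x\rangle$ is weakly continuous. Hence $\|x_{\alpha_{n}}\|\to\|x_{\alpha}\|$.
\item Weak convergence together with convergence of norms in a uniformly convex space (the Radon--Riesz property) gives $x_{\alpha_{n}}\to x_{\alpha}$ in norm.
\end{itemize}
Continuity of $\psi_{k}^{\delta}$ follows because $A$ is bounded. The delicate part is the norm-convergence step, and I expect the Radon--Riesz argument to handle it.
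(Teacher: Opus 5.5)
Your proof is correct. The paper gives no argument of its own here. It quotes \cite[Lemma~3.1]{Jin} and only remarks that smoothness of $X$, rather than the uniform smoothness assumed by Jin, is enough.

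You instead supply a self-contained direct-method proof:
\begin{enumerate}
\item existence from coercivity and weak lower semicontinuity on the reflexive space $X$;
\item uniqueness from strict convexity;
\item equivalence with \eqref{LM} from the first-order condition for convex G\^{a}teaux-differentiable functionals;
\item monotonicity of $\psi_{k}^{\delta}$ from the two-inequality comparison;
\item continuity from weak compactness plus convergence of the separate terms of $\Phi_{\alpha_n}(x_{\alpha_n})$.
\end{enumerate}

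Compared with the citation, this gives an explicit check of the paper's remark. Smoothness of $X$ and $Y$ is used only to differentiate $\frac{1}{p}\|\cdot\|_{X}^{p}$ and $\frac{1}{r}\|\cdot\|_{Y}^{r}$. Uniform convexity supplies reflexivity, strict convexity and the Radon--Riesz property. No continuity of $J_{p}$ is ever needed.

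If you prefer to stay within the paper's toolkit, you can also close the final step with Lemma~\ref{thm:BD;SKHK}~(c). Weak convergence together with $\|x_{\alpha_n}\|_{X}\to\|x_{\alpha}\|_{X}$ gives $D_{p}(x_{\alpha_n},x_{\alpha})\to 0$. This is the same tool the paper uses in the proof of Lemma~\ref{lem:general:alpha}.

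Your aside about where Jin uses uniform smoothness is a guess that the paper does not confirm. Nothing in your argument depends on it.
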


We next provide a criterion for the existence and uniqueness of $\alpha_{k}$ satisfying \eqref{mu}. In Hilbert spaces, as in \cite{KNS}, for example, the existence of~$\alpha_{k}$ is guaranteed by an explicit inequality whose proof relies on an orthogonal projection. Since such an argument is not directly available in Banach spaces, we instead formulate the following criterion based on the idea of \cite[Lemma~3.3]{Jin}.

\begin{lemma} \label{lem:general:alpha}
Let $X$ be a smooth and uniformly convex real Banach space and $p, r>1$. Suppose that Assumptions~\ref{asm:XY}~(b) and \ref{asm:op}~(a) hold. Fix $0\leq k<k_{\ast}$, and assume that $x_{k}^{\delta}$ is well-defined, and that there exists a constant $\widehat{c}>0$, independent of $\alpha$, such that 
\begin{equation} \label{condition:alpha:1}
\psi_{k}^{\delta}(\alpha)\geq\mu\left\|y^{\delta}-F(x_{k}^{\delta})\right\|_{Y}
\end{equation}
implies 
\begin{equation} \label{eq:lem:Jin32:alpha:1}
\|y^{\delta}-F(x_{k}^{\delta})\|_{Y}^{r}\leq\widehat{c} \alpha D_{p}(x^{\dagger},x_{k}^{\delta}) .
\end{equation}
Then, there exists a unique $\alpha_{k}>0$ satisfying \eqref{mu}.
\end{lemma}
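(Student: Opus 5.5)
By Lemma~\ref{lem:Jin31:alpha}, the function $\psi_{k}^{\delta}$ is continuous and nondecreasing on $(0,\infty)$. The plan is to use the intermediate value theorem for existence. Uniqueness will come from a strict monotonicity argument at the level $\mu\|y^{\delta}-F(x_{k}^{\delta})\|_{Y}$. Throughout, write $\eta=\|y^{\delta}-F(x_{k}^{\delta})\|_{Y}$; since $k<k_{\ast}$, the discrepancy principle \eqref{MDP} gives $\eta>\tau\delta\ge 0$.

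\emph{Behaviour as $\alpha\to\infty$.} Compare $\Phi_{\alpha}(x_{k}(\alpha,y^{\delta}))\le\Phi_{\alpha}(x_{k}^{\delta})=\frac1r\eta^{r}$. This gives $\alpha D_{p}(x_{k}(\alpha,y^{\delta}),x_{k}^{\delta})\le\frac1r\eta^{r}$, so $D_{p}(x_{k}(\alpha,y^{\delta}),x_{k}^{\delta})\to0$. Uniform convexity and Lemma~\ref{thm:BD;SKHK}(c) then give $x_{k}(\alpha,y^{\delta})\to x_{k}^{\delta}$ in norm. (If (c) is only stated in the other argument, I would use instead the elementary estimate available in uniformly convex spaces, or simply the $p$-convexity of Lemma~\ref{thm:BD;SKHK}(d) when it holds.) Since $F'(x_{k}^{\delta})$ is bounded by \eqref{F'-bounded}, $\psi_{k}^{\delta}(\alpha)\to\eta>\mu\eta$. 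Hence $\psi_{k}^{\delta}(\alpha)>\mu\eta$ for large $\alpha$.

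\emph{Behaviour as $\alpha\to0$.} This is the key use of the hypothesis. Suppose, for contradiction, that $\psi_{k}^{\delta}(\alpha)\ge\mu\eta$ for every $\alpha>0$. Then \eqref{condition:alpha:1} holds for all $\alpha$, so \eqref{eq:lem:Jin32:alpha:1} yields $\eta^{r}\le\widehat c\,\alpha D_{p}(x^{\dagger},x_{k}^{\delta})$ for all $\alpha>0$. Letting $\alpha\to0$ forces $\eta=0$, contradicting $\eta>0$. Thus some $\alpha'$ has $\psi_{k}^{\delta}(\alpha')<\mu\eta$. By continuity and the intermediate value theorem on $[\alpha',\alpha'']$, with $\alpha''$ large, there is $\alpha_{k}$ satisfying \eqref{mu}.

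\emph{Uniqueness.} This is the main obstacle, since Lemma~\ref{lem:Jin31:alpha} only gives that $\psi_{k}^{\delta}$ is nondecreasing. I would follow \cite[Lemma~3.3]{Jin}. Suppose $\alpha<\beta$ both satisfy \eqref{mu}, so $\psi_{k}^{\delta}(\alpha)=\psi_{k}^{\delta}(\beta)$. Write $x_\alpha=x_{k}(\alpha,y^{\delta})$ and $x_\beta=x_{k}(\beta,y^{\delta})$. Using the minimality of each point for its own functional, $\Phi_{\alpha}(x_\alpha)\le\Phi_{\alpha}(x_\beta)$ and $\Phi_{\beta}(x_\beta)\le\Phi_{\beta}(x_\alpha)$. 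Since the residual terms coincide, these give $D_{p}(x_\alpha,x_{k}^{\delta})=D_{p}(x_\beta,x_{k}^{\delta})$. Then $x_\beta$ also minimizes $\Phi_{\alpha}$. Uniqueness of the minimizer, which follows from strict convexity of $D_{p}(\cdot,x_{k}^{\delta})$ (Lemma~\ref{lem:Jin31:alpha}), gives $x_\alpha=x_\beta$. Comparing the optimality conditions \eqref{LM} for $\alpha$ and $\beta$ then gives
\[
(\beta-\alpha)\bigl(J_{p}(x_\alpha)-J_{p}(x_{k}^{\delta})\bigr)=0,
\]
so $J_{p}(x_\alpha)=J_{p}(x_{k}^{\delta})$. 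By strict monotonicity (Lemma~\ref{thm:DM;SKHK}(c)), this means $x_\alpha=x_{k}^{\delta}$, and therefore $\psi_{k}^{\delta}(\alpha)=\eta\neq\mu\eta$, a contradiction. Hence $\alpha_{k}$ is unique.
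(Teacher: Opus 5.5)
Your proof is correct and follows the paper's argument essentially step by step: the limit $\alpha\to\infty$ via $\Phi_{\alpha}(x_{k}(\alpha,y^{\delta}))\le\Phi_{\alpha}(x_{k}^{\delta})$ and Lemma~\ref{thm:BD;SKHK}~(c), which is already stated in the argument order you need, then the contradiction as $\alpha\to0$ from the hypothesis, the intermediate value theorem, and uniqueness via equal Bregman terms, comparison of the two optimality conditions, and strict monotonicity of $J_{p}$. One cosmetic point: $F'(x_{k}^{\delta})$ is bounded simply because it is a Fr\'{e}chet derivative, so there is no need to invoke \eqref{F'-bounded}, which is only assumed on $B_{\rho}(x^{\dagger},D_{p})$ and this lemma does not place $x_{k}^{\delta}$ there.
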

\begin{proof}
First, note that there exists a unique $x_{k}(\alpha, y^{\delta})$ by Lemma~\ref{lem:Jin31:alpha}. 
Since $x_{k}(\alpha,y^{\delta})$ is a minimizer, 
\begin{math}
\Phi_{\alpha} ( x_{k} ( \alpha , y^{\delta} ) ) 
\leq \Phi_{\alpha} ( x_{k}^{\delta} ) 
\end{math}
holds. 
Thus, Lemma~\ref{thm:BD;SKHK}~(a) yields
\[
0\leq D_{p}(x_{k}(\alpha,y^{\delta}),x_{k}^{\delta})\leq\frac{1}{\alpha r}\|y^{\delta}-F(x_{k}^{\delta})\|_{Y}^{r}\to 0\qquad (\alpha\to\infty).
\]
Hence, Lemma~\ref{thm:BD;SKHK}~(c) gives $x_{k}(\alpha,y^{\delta})\to x_{k}^{\delta}$, and therefore
\begin{equation} \label{prep1forlem:Jin33:alpha:noisy}
\psi_{k}^{\delta}(\alpha)\to\|y^{\delta}-F(x_{k}^{\delta})\|_{Y}\qquad (\alpha\to\infty).
\end{equation}
We note that, since $k<k_{\ast}$, \eqref{MDP} gives
\begin{equation} \label{prep2forlem:Jin33:alpha:noisy}
\|y^{\delta}-F(x_{k}^{\delta})\|_{Y}>\tau\delta>0.
\end{equation}
Together with \eqref{prep2forlem:Jin33:alpha:noisy} and $0<\mu<1$, \eqref{prep1forlem:Jin33:alpha:noisy} shows that 
\eqref{condition:alpha:1} holds for sufficiently large $\alpha$. 
On the other hand, if \eqref{condition:alpha:1} holds for any $\alpha > 0$,
then
we obtain $y^{\delta} = F(x_{k}^{\delta})$
by taking the limit $\alpha \to 0$ in \eqref{eq:lem:Jin32:alpha:1}, 
which contradicts \eqref{prep2forlem:Jin33:alpha:noisy}. 
Hence, there exists $\widehat{\alpha} > 0$ satisfying 
$\psi_{k}^{\delta} ( \widehat{\alpha} ) < \mu \| y^{\delta} - F( x_{k}^{\delta} ) \|_{Y}$. 
Since $\psi_{k}^{\delta}$ is continuous 
by Lemma~\ref{lem:Jin31:alpha}, 
the intermediate value theorem implies the existence of $\alpha_{k}$. 

We next show the uniqueness of $\alpha_{k}$. Suppose that $\alpha_{k}^{\ell}$, $\ell=1,2$, are distinct positive roots of \eqref{mu}. Since $\psi_{k}^{\delta}(\alpha_{k}^{1})=\psi_{k}^{\delta}(\alpha_{k}^{2})$, the minimizing property of $x_{k}(\alpha_{k}^{\ell},y^{\delta})$ yields $D_{p}(x_{k}(\alpha_{k}^{1},y^{\delta}),x_{k}^{\delta})=D_{p}(x_{k}(\alpha_{k}^{2},y^{\delta}),x_{k}^{\delta})$. Thus, $x_{k}(\alpha_{k}^{1},y^{\delta})$ is a minimizer of both $\Phi_{\alpha_{k}^{1}}$ and $\Phi_{\alpha_{k}^{2}}$. 
Hence, for $\ell = 1, 2$, $\alpha_{k}^{\ell} ( J_{p} ( x_{k} ( \alpha_{k}^{1} , y^{\delta} ) ) - J_{p} ( x_{k}^{\delta} ) )$ equals the right-hand side of \eqref{LM} with $\alpha = \alpha_{k}^{1}$. 
Using $\alpha_{k}^{1}\neq\alpha_{k}^{2}$, we obtain $J_{p}(x_{k}(\alpha_{k}^{1},y^{\delta}))-J_{p}(x_{k}^{\delta})=0$. Since $X$ is strictly convex, Lemma~\ref{thm:DM;SKHK}~(c) implies that $J_{p}$ is strictly monotone. Hence, $x_{k}(\alpha_{k}^{1},y^{\delta})=x_{k}^{\delta}$. Consequently, \eqref{mu} contradicts \eqref{prep2forlem:Jin33:alpha:noisy} since $0<\mu<1$.
\end{proof}

We now derive an identity that will be useful throughout the analysis.
\begin{lemma} \label{lem:identity}
Let $X$ be a smooth real Banach space, and $p, r>1$. 
Suppose that Assumption~\ref{asm:XY}~(b) holds. 
Fix $0\leq k<k_{\ast}$,  $\alpha > 0$, and assume that $x_{k}(\alpha, y^{\delta})$ is well-defined. Then, we have
\begin{equation} \label{eq:lem:identity}
D_{p}(x^{\dagger}, x_{k}(\alpha, y^{\delta}))-D_{p}(x^{\dagger}, x_{k}^{\delta})=-D_{p}(x_{k}(\alpha, y^{\delta}), x_{k}^{\delta})-\frac{1}{\alpha}\psi_{k}^{\delta}(\alpha)^{r}+Z_{k}^{\delta},
\end{equation}
where
\[
Z_{k}^{\delta}=\frac{1}{\alpha}\langle J_{r}(y^{\delta}-F(x_{k}^{\delta})-F'(x_{k}^{\delta})(x_{k}(\alpha, y^{\delta})-x_{k}^{\delta})), y^{\delta}-F(x_{k}^{\delta})-F'(x_{k}^{\delta})(x^{\dagger}-x_{k}^{\delta})\rangle_{Y^{\ast},  Y}.
\]
\end{lemma}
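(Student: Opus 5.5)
We plan to derive the identity from the three-point identity \eqref{Bregman;identity} combined with the optimality condition \eqref{LM}. Write $x_{\alpha}=x_{k}(\alpha,y^{\delta})$ for brevity. We apply \eqref{Bregman;identity} with $w=x^{\dagger}$, $u=x_{\alpha}$ and $v=x_{k}^{\delta}$, which gives
\[
D_{p}(x^{\dagger},x_{\alpha})-D_{p}(x^{\dagger},x_{k}^{\delta})=-D_{p}(x_{\alpha},x_{k}^{\delta})+\langle J_{p}(x_{\alpha})-J_{p}(x_{k}^{\delta}),x_{\alpha}-x^{\dagger}\rangle_{X^{\ast},X}.
\]
This already produces the first term on the right-hand side of \eqref{eq:lem:identity}. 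It remains to rewrite the duality pairing.

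To do this, we substitute \eqref{LM}, divided by $\alpha>0$, and move the adjoint across the pairing. This turns the term into
\[
\frac{1}{\alpha}\langle J_{r}(R_{\alpha}),F'(x_{k}^{\delta})(x_{\alpha}-x^{\dagger})\rangle_{Y^{\ast},Y},
\]
where $R_{\alpha}=y^{\delta}-F(x_{k}^{\delta})-F'(x_{k}^{\delta})(x_{\alpha}-x_{k}^{\delta})$. The key algebraic step is to split $x_{\alpha}-x^{\dagger}=(x_{\alpha}-x_{k}^{\delta})-(x^{\dagger}-x_{k}^{\delta})$, which gives
\[
F'(x_{k}^{\delta})(x_{\alpha}-x^{\dagger})=-R_{\alpha}+\bigl(y^{\delta}-F(x_{k}^{\delta})-F'(x_{k}^{\delta})(x^{\dagger}-x_{k}^{\delta})\bigr).
\]

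Pairing with $J_{r}(R_{\alpha})$, the first piece contributes $-\frac{1}{\alpha}\langle J_{r}(R_{\alpha}),R_{\alpha}\rangle_{Y^{\ast},Y}$. By the definition of the duality mapping, this equals $-\frac{1}{\alpha}\|R_{\alpha}\|_{Y}\|R_{\alpha}\|_{Y}^{r-1}=-\frac{1}{\alpha}\psi_{k}^{\delta}(\alpha)^{r}$. The second piece is exactly $Z_{k}^{\delta}$, which completes the identity. Single-valuedness of $J_{p}$ and $J_{r}$ is guaranteed by the smoothness of $X$ and of $Y$ (Assumption~\ref{asm:XY}~(b)) together with Lemma~\ref{thm:DM;SKHK}~(b), so all expressions are unambiguous.

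No genuine obstacle is expected. The only points requiring care are the sign bookkeeping in \eqref{Bregman;identity}, whose pairing involves $u-w=x_{\alpha}-x^{\dagger}$, and the use of the well-definedness hypothesis on $x_{\alpha}$ to invoke \eqref{LM}.
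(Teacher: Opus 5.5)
Your proof is correct. It is a more economical version of the paper's argument: same ingredients, different assignment in the three-point identity.

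You apply \eqref{Bregman;identity} with $u=x_{\alpha}$, $v=x_{k}^{\delta}$. The term $-D_{p}(x_{\alpha},x_{k}^{\delta})$ then appears at once, and a single use of \eqref{LM}, together with your splitting of $x_{\alpha}-x^{\dagger}$, produces both $-\frac{1}{\alpha}\psi_{k}^{\delta}(\alpha)^{r}$ and $Z_{k}^{\delta}$.

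The paper takes $u=x_{k}^{\delta}$, $v=x_{\alpha}$ instead. This yields the opposite Bregman term $+D_{p}(x_{k}^{\delta},x_{\alpha})$ and a pairing of $J_{r}(R_{\alpha})$ against $y^{\delta}-F(x_{k}^{\delta})$ (see \eqref{prep1forlem:identity}). It must then invoke \eqref{LM} a second time in \eqref{prep2forlem:identity}, together with the symmetrized identity \eqref{JpDp}, so that $D_{p}(x_{k}^{\delta},x_{\alpha})$ cancels.

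What the paper gains from this detour is that \eqref{JpDp} is recorded along the way and reused in the proof of Lemma~\ref{lem:alphak}. For the lemma itself, your version is shorter and the sign bookkeeping is simpler.
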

\begin{proof}
Applying \eqref{Bregman;identity} with $w=x^{\dagger}$, $u=x_{k}^{\delta}$ and $v=x_{k}(\alpha, y^{\delta})$, and using \eqref{LM} yield
\begin{align} \label{prep1forlem:identity}
&D_{p}(x^{\dagger}, x_{k}(\alpha, y^{\delta}))-D_{p}(x^{\dagger},x _{k}^{\delta}) \nonumber\\
&=D_{p}(x_{k}^{\delta},x_{k}(\alpha, y^{\delta}))+Z_{k}^{\delta} \nonumber \\
&\quad-\frac{1}{\alpha}\left\langle J_{r}\left(y^{\delta}-F(x_{k}^{\delta})-F'(x_{k}^{\delta})(x_{k}(\alpha, y^{\delta})-x_{k}^{\delta})\right), y^{\delta}-F(x_{k}^{\delta})\right\rangle_{Y^{\ast}, Y}.
\end{align}
It follows from \eqref{LM} that 
\begin{align} \label{prep2forlem:identity}
&\left\langle J_{r}\left(y^{\delta}-F(x_{k}^{\delta})-F'(x_{k}^{\delta})(x_{k}(\alpha, y^{\delta})-x_{k}^{\delta})\right), y^{\delta}-F(x_{k}^{\delta})\right\rangle_{Y^{\ast}, Y} \nonumber \\
&=\psi_{k}^{\delta}(\alpha)^{r}+\alpha\left\langle J_{p}(x_{k}(\alpha, y^{\delta}))-J_{p}(x_{k}^{\delta}), x_{k}(\alpha, y^{\delta})-x_{k}^{\delta}\right\rangle_{X^{\ast}, X}.
\end{align}
By \eqref{Bregman;identity} with $u=x_{k}(\alpha, y^{\delta})$ and $v=w=x_{k}^{\delta}$, we have
\begin{align} \label{JpDp}
&\left\langle J_{p}(x_{k}(\alpha, y^{\delta}))-J_{p}(x_{k}^{\delta}), x_{k}(\alpha, y^{\delta})-x_{k}^{\delta}\right\rangle_{X^{\ast}, X} \nonumber \\
&=D_{p}(x_{k}(\alpha, y^{\delta}),x_{k}^{\delta})+D_{p}(x_{k}^{\delta},x_{k}(\alpha, y^{\delta})).
\end{align}
Combining \eqref{prep1forlem:identity}--\eqref{JpDp}, we obtain \eqref{eq:lem:identity}.
\end{proof}

We next establish a key estimate, which shows that a candidate in the next step of the LM-method is closer to $x^{\dagger}$ than the current iterate in terms of the Bregman distance.
\begin{lemma} \label{lem:Jin32:noisy}
Suppose that Assumptions \ref{asm:XY}, \ref{asm:op}, \ref{asm:x0}~(a) hold. Let $r>1$.  Assume further that
\begin{equation} \label{condition:Jin32:alpha}
C_{\mathrm{n}}=1-\frac{1}{\mu}\left[\frac{1}{\tau}+\left(1+\frac{1}{\tau}\right)\widetilde{C}_{\rho}\right]>0,
\end{equation}
where
$\widetilde{C}_{\rho} = \widetilde{C} \rho^{\frac{2\varepsilon}{(1+\varepsilon)p}}$ and $\widetilde{C} = 2^{-1} p^{\frac{2}{p}} C_{p}^{-\frac{2}{p}} LC_{F}^{\frac{2}{1+\varepsilon}}$. 
Fix $0\leq k<k_{\ast}$, and assume that $x_{k}^{\delta}\in B_{\rho}(x^{\dagger},D_{p})$ is well-defined. Then, for any $\alpha>0$ satisfying \eqref{condition:alpha:1}, we have \begin{equation} \label{eq:lem:Jin32:alpha}
\frac{1}{\alpha}\|y^{\delta}-F(x_{k}^{\delta})\|_{Y}^{r}\leq\frac{1}{C_{\mathrm{n}}\mu^{r}}\left(D_{p}(x^{\dagger},x_{k}^{\delta})-D_{p}(x^{\dagger},x_{k}(\alpha,y^{\delta}))\right).
\end{equation}
\end{lemma}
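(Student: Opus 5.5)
The plan is to start from the identity of Lemma~\ref{lem:identity} and show that the term $Z_{k}^{\delta}$ is dominated by a fraction $(1-C_{\mathrm{n}})$ of $\frac{1}{\alpha}\psi_{k}^{\delta}(\alpha)^{r}$. Since $D_{p}(x_{k}(\alpha,y^{\delta}),x_{k}^{\delta})\geq0$ by Lemma~\ref{thm:BD;SKHK}~(a), we can drop that term in \eqref{eq:lem:identity}. Bounding $Z_{k}^{\delta}$ with $\|J_{r}(z)\|_{Y^{\ast}}=\|z\|_{Y}^{r-1}$ then gives
\[
D_{p}(x^{\dagger},x_{k}(\alpha,y^{\delta}))-D_{p}(x^{\dagger},x_{k}^{\delta})\leq-\frac{1}{\alpha}\psi_{k}^{\delta}(\alpha)^{r}+\frac{1}{\alpha}\psi_{k}^{\delta}(\alpha)^{r-1}\,\|y^{\delta}-F(x_{k}^{\delta})-F'(x_{k}^{\delta})(x^{\dagger}-x_{k}^{\delta})\|_{Y}.
\]

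The main step is to estimate the linearization residual. Using $F(x^{\dagger})=y$ and \eqref{noiselevel}, it is at most $\delta+\|F(x^{\dagger})-F(x_{k}^{\delta})-F'(x_{k}^{\delta})(x^{\dagger}-x_{k}^{\delta})\|_{Y}$. By the Lipschitz bound \eqref{F'-Lip}, the Taylor remainder is at most $\frac{L}{2}\|x^{\dagger}-x_{k}^{\delta}\|_{X}^{2}$.

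Next we convert this to data-space quantities. $p$-convexity (Lemma~\ref{thm:BD;SKHK}~(d)) gives $\|x^{\dagger}-x_{k}^{\delta}\|_{X}^{2}\leq (p/C_{p})^{2/p}D_{p}(x^{\dagger},x_{k}^{\delta})^{2/p}$. We then split the exponent as
\[
D^{2/p}=D^{\frac{2\varepsilon}{(1+\varepsilon)p}}\,D^{\frac{2}{(1+\varepsilon)p}},
\]
bounding the first factor by $\rho^{\frac{2\varepsilon}{(1+\varepsilon)p}}$ since $x_{k}^{\delta}\in B_{\rho}(x^{\dagger},D_{p})$. For the second factor, the H\"older estimate \eqref{F;inv-Holder} gives $C_{F}^{\frac{2}{1+\varepsilon}}\|F(x^{\dagger})-F(x_{k}^{\delta})\|_{Y}$. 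Altogether the Taylor remainder is at most
\[
\widetilde{C}_{\rho}\|y-F(x_{k}^{\delta})\|_{Y}\leq\widetilde{C}_{\rho}\bigl(\|y^{\delta}-F(x_{k}^{\delta})\|_{Y}+\delta\bigr).
\]
Since $k<k_{\ast}$, \eqref{MDP} gives $\delta<\tau^{-1}\|y^{\delta}-F(x_{k}^{\delta})\|_{Y}$. Hence the residual is at most
\[
\Bigl[\frac{1}{\tau}+\Bigl(1+\frac{1}{\tau}\Bigr)\widetilde{C}_{\rho}\Bigr]\|y^{\delta}-F(x_{k}^{\delta})\|_{Y}.
\]

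To finish, we use \eqref{condition:alpha:1}, which reads $\|y^{\delta}-F(x_{k}^{\delta})\|_{Y}\leq\psi_{k}^{\delta}(\alpha)/\mu$. This bounds the $Z$-term by $(1-C_{\mathrm{n}})\frac{1}{\alpha}\psi_{k}^{\delta}(\alpha)^{r}$, so the decrease of $D_{p}(x^{\dagger},\cdot)$ is at least $\frac{C_{\mathrm{n}}}{\alpha}\psi_{k}^{\delta}(\alpha)^{r}$. Using \eqref{condition:alpha:1} once more, $\psi_{k}^{\delta}(\alpha)^{r}\geq\mu^{r}\|y^{\delta}-F(x_{k}^{\delta})\|_{Y}^{r}$. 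Dividing by $C_{\mathrm{n}}\mu^{r}>0$ gives \eqref{eq:lem:Jin32:alpha}.

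The expected obstacle is justifying the Taylor remainder bound. The estimate $\frac{L}{2}\|x^{\dagger}-x_{k}^{\delta}\|^{2}$ requires \eqref{F'-Lip} along the whole segment $[x_{k}^{\delta},x^{\dagger}]$. The Bregman ball $B_{\rho}(x^{\dagger},D_{p})$ need not be convex, because $D_{p}(x^{\dagger},\cdot)$ is not convex in its second argument in general. I would first check that the segment stays in the domain where \eqref{F'-Lip} applies. This holds, for instance, if the Lipschitz assumption is understood on a convex set containing the ball. It could also be shown via $p$-convexity, which places the segment in a norm ball around $x^{\dagger}$; from there one would argue that this norm ball lies in $B_{\rho}(x^{\dagger},D_{p})$, possibly after adjusting $\rho$.
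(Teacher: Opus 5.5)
Your proof is correct and is essentially the paper's argument: you bound $Z_k^\delta$ by $\frac1\alpha\psi_k^\delta(\alpha)^{r-1}$ times the linearization residual, control the Taylor remainder through $p$-convexity, the split exponent and \eqref{F;inv-Holder}, and then apply \eqref{MDP} and \eqref{condition:alpha:1}; dropping $D_p(x_k(\alpha,y^\delta),x_k^\delta)$ at the start rather than the end is immaterial. The segment issue you flag is passed over silently in the paper, but it resolves under the stated hypotheses: for $x_t=x_k^\delta+t(x^\dagger-x_k^\delta)$ and $0\le s<t<1$, \eqref{Bregman;identity} with $w=x^\dagger$, $u=x_t$, $v=x_s$ gives $D_p(x^\dagger,x_t)-D_p(x^\dagger,x_s)=-D_p(x_t,x_s)-\frac{1-t}{t-s}\langle J_p(x_t)-J_p(x_s),x_t-x_s\rangle_{X^{\ast},X}\le 0$ by monotonicity of $J_p$, so $B_\rho(x^\dagger,D_p)$ is star-shaped about $x^\dagger$ and contains $[x_k^\delta,x^\dagger]$ (your norm-ball alternative, by contrast, would need an upper bound of $D_p$ in terms of the norm, which the assumptions do not provide).
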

\begin{proof}
Recall $Z_{k}^{\delta}$ from Lemma~\ref{lem:identity}. Then, we have
\begin{equation} \label{prep1forlem:Jin32:alpha}
Z_{k}^{\delta}\leq\frac{1}{\alpha}\psi_{k}^{\delta}(\alpha)^{r-1}\left\|y^{\delta}-F(x_{k}^{\delta})-F'(x_{k}^{\delta})(x^{\dagger}-x_{k}^{\delta})\right\|_{Y}. 
\end{equation}
By the fundamental theorem of calculus for the Fr\'{e}chet derivative~\cite[Lemma\,A.63]{Kirsch} and \eqref{F'-Lip}, we have
\begin{equation} \label{FTCTD}
\left\|y-F(x_{k}^{\delta})-F'(x_{k}^{\delta})(x^{\dagger}-x_{k}^{\delta})\right\|_{Y}\leq\frac{L}{2}\left\|x^{\dagger}-x_{k}^{\delta}\right\|_{X}^{2}.
\end{equation}
Moreover, since $x^{\dagger}, x_{k}^{\delta}\in B_{\rho}(x^{\dagger},D_{p})$, it follows from Lemma~\ref{thm:BD;SKHK}~(d) and \eqref{F;inv-Holder} that
\begin{equation} \label{BregmanHolder}
\frac{C_{p}}{p}\|x^{\dagger}-x_{k}^{\delta}\|_{X}^{p}\leq D_{p}(x^{\dagger},x_{k}^{\delta})\leq C_{F}^{p}\left\|y-F(x_{k}^{\delta})\right\|_{Y}^{\frac{1+\varepsilon}{2}p}.
\end{equation}
Using \eqref{BregmanHolder} and $x_{k}^{\delta}\in B_{\rho}(x^{\dagger},D_{p})$, we have 
\begin{align} \label{prep2forlem:Jin32:alpha}
\|x^{\dagger}-x_{k}^{\delta}\|_{X}^{2}
& \leq \left( \frac{p}{C_{p}} \right)^{\frac{2}{p}}
D_{p} ( x^{\dagger} , x_{k}^{\delta} )^{\frac{2 \varepsilon}{( 1 + \varepsilon ) p}}
D_{p} ( x^{\dagger} , x_{k}^{\delta} )^{\frac{2}{( 1 + \varepsilon ) p}}
\nonumber \\
& \leq \left( \frac{p}{C_{p}} \right)^{\frac{2}{p}}
\rho^{\frac{2\varepsilon}{(1+\varepsilon)p}}
C_{F}^{\frac{2}{1 + \varepsilon}} 
\| y - F( x_{k}^{\delta} ) \|_{Y} . 
\end{align}
Combining \eqref{noiselevel}, \eqref{FTCTD}, \eqref{prep2forlem:Jin32:alpha}, \eqref{MDP} and \eqref{condition:alpha:1}, we obtain
\begin{align} \label{prep3forlem:Jin32:alpha}
&\left\|y^{\delta}-F(x_{k}^{\delta})-F'(x_{k}^{\delta})(x^{\dagger}-x_{k}^{\delta})\right\|_{Y}\leq\delta+\frac{L}{2}\left\|x^{\dagger}-x_{k}^{\delta}\right\|_{X}^{2} \nonumber \\
&\leq\delta+\widetilde{C}_{\rho}\left(\delta+\|y^{\delta}-F(x_{k}^{\delta})\|_{Y}\right)<\left[\frac{1}{\tau}+\left(1+\frac{1}{\tau}\right)\widetilde{C}_{\rho}\right]\|y^{\delta}-F(x_{k}^{\delta})\|_{Y} \nonumber \\
&\leq\frac{1}{\mu}\left[\frac{1}{\tau}+\left(1+\frac{1}{\tau}\right)\widetilde{C}_{\rho}\right]\psi_{k}^{\delta}(\alpha).
\end{align}
From \eqref{eq:lem:identity}, \eqref{prep1forlem:Jin32:alpha}, \eqref{prep3forlem:Jin32:alpha} and \eqref{condition:alpha:1}, we obtain 
\begin{equation} \label{prep4forlem:Jin32:alpha}
D_{p}(x^{\dagger}, x_{k}(\alpha, y^{\delta}))-D_{p}(x^{\dagger}, x_{k}^{\delta})\leq-D_{p}(x_{k}(\alpha, y^{\delta}), x_{k}^{\delta})-\frac{C_{\mathrm{n}}\mu^{r}}{\alpha}\|y^{\delta}-F(x_{k}^{\delta})\|_{Y}^{r}.
\end{equation}
Finally, \eqref{eq:lem:Jin32:alpha} follows from \eqref{prep4forlem:Jin32:alpha} and Lemma~\ref{thm:BD;SKHK}~(a).
\end{proof}

As an immediate consequence, we obtain the following estimate for exact data.
\begin{corollary} \label{cor:Jin32:exact}
Suppose that Assumptions \ref{asm:XY}, \ref{asm:op}, \ref{asm:x0}~(a) hold. Let $r>1$. Assume further that
\begin{equation} \label{condition:Jin32:alpha:exact}
C_{\mathrm{e}}=1-\frac{\widetilde{C}_{\rho}}{\mu}>0.
\end{equation}
Fix $k\in\mathbb{N}_{0}=\mathbb{N}\cup\{0\}$, and assume that $x_{k}\in B_{\rho}(x^{\dagger},D_{p})$ is well-defined. Then, the assertion of Lemma~\ref{lem:Jin32:noisy} remains valid with the superscript $\delta$ omitted  and $C_{\mathrm{n}}$ replaced by $C_{\mathrm{e}}$.
\end{corollary}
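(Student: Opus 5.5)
The plan is to repeat the proof of Lemma~\ref{lem:Jin32:noisy} with $\delta=0$, $y^{\delta}=y$, and the discrepancy principle dropped. With exact data the bound $\|y^{\delta}-F(x_{k}^{\delta})\|_{Y}>\tau\delta$ from \eqref{MDP} is no longer available. We therefore assume $F(x_{k})\neq y$. Otherwise \eqref{F;inv-Holder} together with Lemma~\ref{thm:BD;SKHK}~(b) gives $x_{k}=x^{\dagger}$. In that case the left-hand side of the asserted inequality vanishes, and the right-hand side equals $D_{p}(x^{\dagger},x^{\dagger})-D_{p}(x^{\dagger},x_{k}(\alpha,y))$ times a positive factor. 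This is the degenerate situation in which the iteration has stopped, so it can be set aside.

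The identity \eqref{eq:lem:identity} from Lemma~\ref{lem:identity} requires only that $x_{k}(\alpha,y)$ be well-defined, which Lemma~\ref{lem:Jin31:alpha} supplies. It therefore holds with $\delta$ omitted. The Cauchy--Schwarz-type bound \eqref{prep1forlem:Jin32:alpha} on $Z_{k}$ is unchanged, since it uses only $\|J_{r}(z)\|_{Y^{\ast}}=\|z\|_{Y}^{r-1}$.

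The key step is the replacement of \eqref{prep3forlem:Jin32:alpha}. Combining \eqref{FTCTD} with \eqref{prep2forlem:Jin32:alpha}, which use only Assumption~\ref{asm:op}, Lemma~\ref{thm:BD;SKHK}~(d) and $x_{k}\in B_{\rho}(x^{\dagger},D_{p})$, gives
\[
\|y-F(x_{k})-F'(x_{k})(x^{\dagger}-x_{k})\|_{Y}\leq\widetilde{C}_{\rho}\|y-F(x_{k})\|_{Y}\leq\frac{\widetilde{C}_{\rho}}{\mu}\psi_{k}(\alpha),
\]
where the last step uses \eqref{condition:alpha:1}. This is exactly the noisy estimate with the $\tau$-terms removed, which explains why $C_{\mathrm{n}}$ becomes $C_{\mathrm{e}}$. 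The inequality may be non-strict here, but that does not matter.

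Inserting this into \eqref{eq:lem:identity} gives
\[
D_{p}(x^{\dagger},x_{k}(\alpha,y))-D_{p}(x^{\dagger},x_{k})\leq -D_{p}(x_{k}(\alpha,y),x_{k})-\frac{1}{\alpha}\psi_{k}(\alpha)^{r}\Bigl(1-\frac{\widetilde{C}_{\rho}}{\mu}\Bigr).
\]
Using $C_{\mathrm{e}}>0$ and $\psi_{k}(\alpha)\geq\mu\|y-F(x_{k})\|_{Y}$, and dropping the nonnegative Bregman term by Lemma~\ref{thm:BD;SKHK}~(a), yields the claim with constant $1/(C_{\mathrm{e}}\mu^{r})$. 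The only real point to check is that no step of the original proof used $\delta>0$ or $k<k_{\ast}$ beyond \eqref{MDP}; I expect the verification to be routine.
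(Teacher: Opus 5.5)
Your proposal is correct and is the argument the paper intends by "immediate consequence": rerun the proof of Lemma~\ref{lem:Jin32:noisy} with $\delta=0$. There \eqref{prep3forlem:Jin32:alpha} collapses to $\widetilde{C}_{\rho}\|y-F(x_{k})\|_{Y}\leq\frac{\widetilde{C}_{\rho}}{\mu}\psi_{k}(\alpha)$, so $C_{\mathrm{n}}$ becomes $C_{\mathrm{e}}$. One minor point: the case split on $F(x_{k})=y$ is unnecessary, because no step of your chain uses $F(x_{k})\neq y$, so that case is covered by the same argument (there $x_{k}(\alpha,y)=x^{\dagger}$ and both sides vanish) rather than needing to be set aside.
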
 

Note that \eqref{eq:lem:Jin32:alpha} yields \eqref{eq:lem:Jin32:alpha:1} with $\widehat{c}=\frac{1}{C_{\mathrm{n}}\mu^{r}}$ by Lemma~\ref{thm:BD;SKHK}~(a). Hence, if $x_{k}^{\delta}\in B_{\rho}(x^{\dagger},D_{p})$, the following corollary follows from Lemmas~\ref{lem:Jin32:noisy} and~\ref{lem:general:alpha}.
\begin{corollary} \label{cor:Jin33:alpha:noisy}
Under the assumptions of Lemma~\ref{lem:Jin32:noisy}, there exists a unique $\alpha_{k}>0$ satisfying \eqref{mu}. Consequently, $x_{k+1}^{\delta} = x_{k} ( \alpha_{k} , y^{\delta} )$ is well-defined.
\end{corollary}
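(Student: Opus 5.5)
The plan is to verify the hypotheses of Lemma~\ref{lem:general:alpha} with the explicit constant $\widehat{c}=\frac{1}{C_{\mathrm{n}}\mu^{r}}$, and then read off the well-definedness of $x_{k+1}^{\delta}$ from Lemma~\ref{lem:Jin31:alpha}.

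First, the structural hypotheses of Lemma~\ref{lem:general:alpha} hold. By Assumption~\ref{asm:XY}~(a), $X$ is $p$-convex and smooth, hence uniformly convex by Remark~\ref{remark:convex:smooth:s}. Assumption~\ref{asm:XY}~(b) and Assumption~\ref{asm:op}~(a) are in force, and $p\geq 2>1$, $r>1$. We also fix $0\leq k<k_{\ast}$ with $x_{k}^{\delta}\in B_{\rho}(x^{\dagger},D_{p})$ well-defined, exactly as in Lemma~\ref{lem:Jin32:noisy}.

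Next comes the implication \eqref{condition:alpha:1}$\Rightarrow$\eqref{eq:lem:Jin32:alpha:1}. Take any $\alpha>0$ satisfying \eqref{condition:alpha:1}. Since $C_{\mathrm{n}}>0$ by \eqref{condition:Jin32:alpha}, Lemma~\ref{lem:Jin32:noisy} gives \eqref{eq:lem:Jin32:alpha}. By Lemma~\ref{thm:BD;SKHK}~(a) we have $D_{p}(x^{\dagger},x_{k}(\alpha,y^{\delta}))\geq 0$, so dropping this term yields
\[
\|y^{\delta}-F(x_{k}^{\delta})\|_{Y}^{r}\leq\frac{1}{C_{\mathrm{n}}\mu^{r}}\,\alpha\, D_{p}(x^{\dagger},x_{k}^{\delta}).
\]
This is \eqref{eq:lem:Jin32:alpha:1} with $\widehat{c}=\frac{1}{C_{\mathrm{n}}\mu^{r}}$. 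This constant depends only on $\mu,\tau,\rho,r$ and the structural constants, and not on $\alpha$, as Lemma~\ref{lem:general:alpha} requires.

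Lemma~\ref{lem:general:alpha} then yields a unique $\alpha_{k}>0$ solving \eqref{mu}. By Lemma~\ref{lem:Jin31:alpha}, $x_{k}(\alpha_{k},y^{\delta})$ exists and is unique, so $x_{k+1}^{\delta}=x_{k}(\alpha_{k},y^{\delta})$ is well-defined.

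I expect no real obstacle. The only point needing care is that $\widehat{c}$ must be independent of $\alpha$, and this is immediate from the form of \eqref{eq:lem:Jin32:alpha}.
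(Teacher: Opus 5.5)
Your proposal is correct and follows the paper's own argument: you drop the nonnegative term $D_{p}(x^{\dagger},x_{k}(\alpha,y^{\delta}))$ in \eqref{eq:lem:Jin32:alpha} to obtain \eqref{eq:lem:Jin32:alpha:1} with $\widehat{c}=\frac{1}{C_{\mathrm{n}}\mu^{r}}$, and then invoke Lemma~\ref{lem:general:alpha}. Your explicit check that $p$-convexity supplies the uniform convexity required by Lemma~\ref{lem:general:alpha} is a detail the paper leaves implicit.
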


For exact data, the same result holds with Corollary~\ref{cor:Jin32:exact} in place of Lemma~\ref{lem:Jin32:noisy}.
\begin{remark} \label{rmk:welldefined}
To establish the well-definedness of the LM-method, it remains to show that $x_{k+1}^{\delta}\in B_{\rho}(x^{\dagger},D_{p})$, which is proved in Sections~\ref{section:exact}~and~\ref{section:noisy}. 
\end{remark}

Finally, we derive an estimate for $\alpha_{k}$. Henceforth, we write the corresponding residual explicitly instead of using $\psi_{k}^{\delta}$.
\begin{lemma} \label{lem:alphak}
Suppose that Assumption \ref{asm:XY} holds. Let $r>1$. Fix $0\leq k<k_{\ast}$, and assume that $x_{k}^{\delta}$ and $\alpha_{k}$ are well-defined. Then, we have
\begin{equation} \label{eq:lem:alphak}
\alpha_{k}\leq \frac{p}{2C_{p}}\frac{\mu^{r-1}}{(1-\mu)^{p-1}}\|F'(x_{k}^{\delta})\|_{\mathcal{L}(X,Y)}^{p}\|y^{\delta}-F(x_{k}^{\delta})\|_{Y}^{r-p}.
\end{equation}
\end{lemma}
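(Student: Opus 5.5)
The plan is to test the optimality condition \eqref{LM} at $\alpha=\alpha_k$ against the step $x_{k+1}^{\delta}-x_k^{\delta}$. This gives an upper bound for $\alpha_k$ times a Bregman-type quantity. I then bound that quantity from below with $p$-convexity and bound the step length from below using \eqref{mu}.

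Write $\Delta=x_{k+1}^{\delta}-x_k^{\delta}$, where $x_{k+1}^{\delta}=x_k(\alpha_k,y^{\delta})$, and let $R_k=y^{\delta}-F(x_k^{\delta})-F'(x_k^{\delta})\Delta$. By \eqref{mu}, $\|R_k\|_Y=\mu\|y^{\delta}-F(x_k^{\delta})\|_Y$.

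\emph{Step 1 (lower bound on the step).} By the triangle inequality and \eqref{mu},
\[
\|F'(x_k^{\delta})\|_{\mathcal{L}(X,Y)}\|\Delta\|_X\geq\|F'(x_k^{\delta})\Delta\|_Y\geq(1-\mu)\|y^{\delta}-F(x_k^{\delta})\|_Y .
\]
Since $k<k_\ast$, the right-hand side is positive by \eqref{MDP}. Hence $\Delta\neq0$, $F'(x_k^{\delta})\neq0$, and
\[
\|\Delta\|_X\geq(1-\mu)\,\|y^{\delta}-F(x_k^{\delta})\|_Y\big/\|F'(x_k^{\delta})\|_{\mathcal{L}(X,Y)} .
\]

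\emph{Step 2 (pairing the optimality condition).} Apply both sides of \eqref{LM} to $\Delta$. On the left, \eqref{JpDp} shows that $\langle J_p(x_{k+1}^{\delta})-J_p(x_k^{\delta}),\Delta\rangle$ equals $D_p(x_{k+1}^{\delta},x_k^{\delta})+D_p(x_k^{\delta},x_{k+1}^{\delta})$. By Lemma~\ref{thm:BD;SKHK}~(d), this sum is at least $\frac{2C_p}{p}\|\Delta\|_X^p$. On the right, the defining properties of $J_r$ give
\[
\langle J_r(R_k),F'(x_k^{\delta})\Delta\rangle\leq\|R_k\|_Y^{r-1}\,\|F'(x_k^{\delta})\|\,\|\Delta\|_X=\mu^{r-1}\|y^{\delta}-F(x_k^{\delta})\|_Y^{r-1}\|F'(x_k^{\delta})\|\,\|\Delta\|_X .
\]
Dividing by $\|\Delta\|_X^{p}>0$ yields
\[
\alpha_k\leq\frac{p}{2C_p}\,\mu^{r-1}\,\|y^{\delta}-F(x_k^{\delta})\|_Y^{r-1}\,\|F'(x_k^{\delta})\|\,\|\Delta\|_X^{1-p}.
\]

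\emph{Step 3 (combine).} Since $p\geq2$ under Assumption~\ref{asm:XY}~(a), the exponent $1-p$ is negative. We may therefore insert the lower bound on $\|\Delta\|_X$ from Step 1 to get
\[
\|\Delta\|_X^{1-p}\leq(1-\mu)^{1-p}\,\|y^{\delta}-F(x_k^{\delta})\|_Y^{1-p}\,\|F'(x_k^{\delta})\|^{p-1}.
\]
Collecting powers, $\|F'(x_k^{\delta})\|$ appears to the power $1+(p-1)=p$, and the residual $\|y^{\delta}-F(x_k^{\delta})\|_Y$ appears to the power $(r-1)+(1-p)=r-p$. This is exactly \eqref{eq:lem:alphak}.

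The only delicate point is justifying the division in Step 2 and the direction of the inequality in Step 3. The division needs $\Delta\neq0$ and $F'(x_k^{\delta})\neq0$, which come from $k<k_\ast$ together with $\mu<1$. The direction of the inequality relies on $p\geq2$. The rest is bookkeeping of exponents.
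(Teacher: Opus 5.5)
Your proposal is correct and follows the paper's proof essentially step by step. You pair \eqref{LM} with $x_{k+1}^{\delta}-x_k^{\delta}$, bound the left side below via \eqref{JpDp} and Lemma~\ref{thm:BD;SKHK}~(d), bound the right side above via $\|J_r(\cdot)\|_{Y^{\ast}}=\|\cdot\|_Y^{r-1}$ and \eqref{mu}, and bound the step length below via the triangle inequality, \eqref{mu} and \eqref{MDP}; in Step 3 you only need $p>1$, not $p\geq 2$, for the exponent $1-p$ to be negative.
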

\begin{proof}
Using \eqref{LM} yields
\begin{align} \label{prep1forlem:alphak}
&\alpha_{k}\left\langle J_{p}(x_{k+1}^{\delta})-J_{p}(x_{k}^{\delta}), x_{k+1}^{\delta}-x_{k}^{\delta}\right\rangle_{X^{\ast}, X} \nonumber \\
&=\left\langle F'(x_{k}^{\delta})^{\ast}J_{r}\left(y^{\delta}-F(x_{k}^{\delta})-F'(x_{k}^{\delta})(x_{k+1}^{\delta}-x_{k}^{\delta})\right), x_{k+1}^{\delta}-x_{k}^{\delta}\right\rangle_{X^{\ast}, X}.
\end{align}
It follows from \eqref{JpDp} and Lemma~\ref{thm:BD;SKHK}~(d) that
\begin{equation} \label{prep2forlem:alphak}
\left\langle J_{p}(x_{k+1}^{\delta})-J_{p}(x_{k}^{\delta}), x_{k+1}^{\delta}-x_{k}^{\delta}\right\rangle_{X^{\ast}, X}\geq\frac{2C_{p}}{p}\|x_{k+1}^{\delta}-x_{k}^{\delta}\|_{X}^{p}.
\end{equation}
Moreover, \eqref{mu} gives
\begin{align} \label{prep3forlem:alphak}
&\left\langle F'(x_{k}^{\delta})^{\ast}J_{r}\left(y^{\delta}-F(x_{k}^{\delta})-F'(x_{k}^{\delta})(x_{k+1}^{\delta}-x_{k}^{\delta})\right), x_{k+1}^{\delta}-x_{k}^{\delta}\right\rangle_{X^{\ast}, X} \nonumber \\
&\leq\mu^{r-1}\|y^{\delta}-F(x_{k}^{\delta})\|_{Y}^{r-1}\|F'(x_{k}^{\delta})\|_{\mathcal{L}(X,Y)}\|x_{k+1}^{\delta}-x_{k}^{\delta}\|_{X}.
\end{align}
On the other hand, by \eqref{mu}, we have
\begin{align*}
&\|F'(x_{k}^{\delta})\|_{\mathcal{L}(X,Y)}\|x_{k+1}^{\delta}-x_{k}^{\delta}\|_{X}\geq\|F'(x_{k}^{\delta})(x_{k+1}^{\delta}-x_{k}^{\delta})\|_{Y} \\
&\geq\left\|y^{\delta}-F(x_{k}^{\delta})\right\|_{Y}-\left\|y^{\delta}-F(x_{k}^{\delta})-F'(x_{k}^{\delta})(x_{k+1}^{\delta}-x_{k}^{\delta})\right\|_{Y} \\
&=(1-\mu)\|y^{\delta}-F(x_{k}^{\delta})\|_{Y} > 0.
\end{align*}
Note that \eqref{prep2forlem:Jin33:alpha:noisy} holds since $k<k_{\ast}$. Thus, $\|F'(x_{k}^{\delta})\|_{\mathcal{L}(X,Y)}>0$, and we obtain
\begin{equation} \label{prep4forlem:alphak}
\|x_{k+1}^{\delta}-x_{k}^{\delta}\|_{X}\geq(1-\mu)\frac{\|y^{\delta}-F(x_{k}^{\delta})\|_{Y}}{\|F'(x_{k}^{\delta})\|_{\mathcal{L}(X,Y)}}.
\end{equation}
Combining \eqref{prep1forlem:alphak}--\eqref{prep4forlem:alphak}, we obtain \eqref{eq:lem:alphak}.
\end{proof}

\section{Convergence Analysis for Exact Data} \label{section:exact} 
In this section, we establish the well-definedness and convergence of the LM-method for exact data and derive its convergence rates. Throughout this section, when referring to the corresponding formulas in Section~\ref{sec:preliminaries;LM}, we omit the superscript~$\delta$.

\begin{theorem} \label{thm:exact}
Suppose that Assumptions~\ref{asm:XY}, \ref{asm:op}, \ref{asm:x0} and \eqref{condition:Jin32:alpha:exact} hold. Let $r>1$. Then, $x_{k}$ is well-defined for all $k\in\mathbb{N}_{0}$, and $x_{k}\to x^{\dagger}$ as $k\to\infty$. Moreover, with 
\[
c=\frac{2C_{\mathrm{e}}C_{p}\mu(1-\mu)^{p-1}}{p\widehat{L}^{p}C_{F}^{\frac{2p}{1+\varepsilon}}},
\]
the convergence rates are given by
\begin{numcases}{D_{p}(x^{\dagger},x_{k})\leq}
\rho(1-c)^{k} & for $\varepsilon=1$ \label{eq:thm:exact:rate1} \\
\left(ck\frac{1-\varepsilon}{1+\varepsilon}+\rho^{-\frac{1-\varepsilon}{1+\varepsilon}}\right)^{-\frac{1+\varepsilon}{1-\varepsilon}}& for $\varepsilon\in(0, 1)$ \label{eq:thm:exact:rate2}
\end{numcases}
\end{theorem}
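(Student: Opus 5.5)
The argument proceeds by induction on $k$, showing that $x_k$ is well-defined, lies in $B_{\rho}(x^{\dagger},D_{p})$, and that $D_p(x^\dagger,x_k)$ is nonincreasing. The base case is Assumption~\ref{asm:x0}~(b). For the step, assume $x_k\in B_{\rho}(x^{\dagger},D_{p})$. If $F(x_k)=y$, then $x_k=x^\dagger$ and $x_{k+1}=x_k$, so nothing needs to be shown. Otherwise the exact-data version of Corollary~\ref{cor:Jin33:alpha:noisy} (via Corollary~\ref{cor:Jin32:exact}) gives a unique $\alpha_k$ and hence $x_{k+1}=x_k(\alpha_k,y)$. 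Since $\alpha_k$ satisfies \eqref{mu}, it satisfies \eqref{condition:alpha:1} with equality. Corollary~\ref{cor:Jin32:exact} then yields
\[
\frac{C_{\mathrm e}\mu^{r}}{\alpha_k}\|y-F(x_k)\|_Y^r\le D_p(x^\dagger,x_k)-D_p(x^\dagger,x_{k+1}).
\]
The left-hand side is nonnegative, so $D_p(x^\dagger,x_{k+1})\le D_p(x^\dagger,x_k)\le\rho$. This shows $x_{k+1}\in B_{\rho}(x^{\dagger},D_{p})$ and closes the induction, settling well-definedness as in Remark~\ref{rmk:welldefined}.

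For the rates, I would bound $1/\alpha_k$ from below using Lemma~\ref{lem:alphak} together with \eqref{F'-bounded}:
\[
\frac{1}{\alpha_k}\ge \frac{2C_p(1-\mu)^{p-1}}{p\mu^{r-1}\widehat L^{p}}\|y-F(x_k)\|_Y^{p-r}.
\]
Substituting this into the decrease inequality, the factor $\mu^r/\mu^{r-1}=\mu$ appears and the powers of the residual combine to $\|y-F(x_k)\|_Y^{p}$. Writing $d_k=D_p(x^\dagger,x_k)$, this gives
\[
d_k-d_{k+1}\ge \frac{2C_{\mathrm e}C_p\mu(1-\mu)^{p-1}}{p\widehat L^{p}}\|y-F(x_k)\|_Y^{p}.
\]
The Hölder estimate \eqref{F;inv-Holder} with $x=x^\dagger$, $\widetilde x=x_k$ gives $\|y-F(x_k)\|_Y^{p}\ge C_F^{-\frac{2p}{1+\varepsilon}}d_k^{\frac{2}{1+\varepsilon}}$. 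Hence $d_{k+1}\le d_k-c\,d_k^{\frac{2}{1+\varepsilon}}$, with exactly the constant $c$ of the theorem. The same inequality holds trivially in the case $F(x_k)=y$, since then $d_k=0=d_{k+1}$.

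For $\varepsilon=1$ the recursion reads $d_{k+1}\le(1-c)d_k$, which gives \eqref{eq:thm:exact:rate1} by induction from $d_0\le\rho$. Note that $c\le1$ is automatic because $d_{k+1}\ge0$ whenever $d_k>0$. For $\varepsilon\in(0,1)$, set $q=\frac{2}{1+\varepsilon}>1$ and use the standard argument for $d_{k+1}\le d_k-c d_k^{q}$. By the mean value theorem applied to $t\mapsto t^{-(q-1)}$,
\[
d_{k+1}^{-(q-1)}-d_k^{-(q-1)}\ge (q-1)\,d_k^{-q}(d_k-d_{k+1})\ge (q-1)c,
\]
using $d_{k+1}\le d_k$. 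Here $q-1=\frac{1-\varepsilon}{1+\varepsilon}$. Summing over $k$ and using $d_0^{-(q-1)}\ge\rho^{-(q-1)}$ gives \eqref{eq:thm:exact:rate2}. If some $d_k=0$, the iteration has stopped at $x^\dagger$ and the bound is trivial.

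Convergence $x_k\to x^\dagger$ follows from $d_k\to0$ together with Lemma~\ref{thm:BD;SKHK}~(d): $\|x_k-x^\dagger\|^p\le\frac{p}{C_p}D_p(x^\dagger,x_k)$, where the symmetric norm is bounded through $D_p(x^\dagger,x_k)$ with the roles arranged accordingly. The main obstacle is the bookkeeping that turns the $\alpha_k$ bound and the Hölder estimate into the single recursion with the precise constant $c$, and checking that Lemma~\ref{lem:alphak} applies when $F(x_k)\ne y$, since $F'(x_k)\ne0$ is guaranteed in its proof. The discrete comparison argument itself is routine.
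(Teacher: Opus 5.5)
Your proposal is correct and follows the paper's proof: the same induction via Corollary~\ref{cor:Jin32:exact}, the same use of Lemma~\ref{lem:alphak} and \eqref{F'-bounded} to express the decrease through $\|y-F(x_k)\|_Y^{p}$, and the same recursion $d_{k+1}\le d_k-c\,d_k^{2/(1+\varepsilon)}$ obtained from \eqref{F;inv-Holder}. The differences are cosmetic. You use the mean value theorem where the paper uses $(1-s)^{-\beta}\ge 1+\beta s$, and you get $x_k\to x^\dagger$ from $d_k\to 0$ and Lemma~\ref{thm:BD;SKHK}~(d), whereas the paper goes through $\|y-F(x_k)\|_Y\to 0$, \eqref{F;inv-Holder} and Lemma~\ref{thm:BD;SKHK}~(c); your route is slightly cleaner because (d) applies directly with $x^\dagger$ in the first argument.
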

\begin{remark}
If $F(x_{k})=y$ for some $k\in \mathbb{N}_{0}$, then $x_{j}=x^{\dagger}$ for all $j\geq k$. Hence, the conclusions of Theorem~\ref{thm:exact} hold in this case. Thus, in what follows, we assume that $F(x_{k})\neq y$ for all $k\in\mathbb{N}_{0}$.
\end{remark}
\begin{proof}[Proof of Theorem~\ref{thm:exact}]
We first show inductively that $x_{k}\in B_{\rho}(x^{\dagger},D_{p})$ is well-defined for all $k\in\mathbb{N}_{0}$. This holds for $k=0$ by Assumption~\ref{asm:x0}~(b). Suppose that $x_{k}\in B_{\rho}(x^{\dagger},D_{p})$ is well-defined for some $k\in\mathbb{N}_{0}$. Corollary~\ref{cor:Jin33:alpha:noisy} for exact data implies that $\alpha_{k}$ is well-defined and hence so is $x_{k+1}$. 
Since $x_{k}\in B_{\rho}(x^{\dagger},D_{p})$ holds and \eqref{mu} is satisfied with equality by the definition of $\alpha_{k}$, Corollary~\ref{cor:Jin32:exact}, together with \eqref{eq:lem:alphak} and \eqref{F'-bounded}, 
\begin{equation} \label{prep1forthm:exact}
D_{p}(x^{\dagger},x_{k+1})-D_{p}(x^{\dagger},x_{k})\leq-\frac{2C_{\mathrm{e}}C_{p}\mu(1-\mu)^{p-1}}{p\widehat{L}^{p}}\|y-F(x_{k})\|_{Y}^{p}\leq0.
\end{equation}
In particular, $D_{p}(x^{\dagger},x_{k+1})\leq D_{p}(x^{\dagger},x_{k})\leq\rho$ and hence $x_{k+1}\in B_{\rho}(x^{\dagger},D_{p})$. This completes the induction as noted in Remark~\ref{rmk:welldefined}. Therefore, $x_{k}$ is well-defined and belongs to $B_{\rho}(x^{\dagger},D_{p})$ for all $k\in\mathbb{N}_{0}$. By \eqref{prep1forthm:exact} and Lemma~\ref{thm:BD;SKHK}~(a), $D_{p}(x^{\dagger},x_{k})$ is nonincreasing and nonnegative. Hence, \eqref{prep1forthm:exact} implies $\|y-F(x_{k})\|_{Y}\to 0$. Then, together with \eqref{F;inv-Holder}, Lemma~\ref{thm:BD;SKHK}~(c) yields $x_{k}\to x^{\dagger}$.

Moreover, it follows from \eqref{F;inv-Holder} and \eqref{prep1forthm:exact} that
\begin{equation} \label{prep2forthm:exact}
D_{p}(x^{\dagger},x_{k+1})-D_{p}(x^{\dagger},x_{k})\leq-cD_{p}(x^{\dagger},x_{k})^{\frac{2}{1+\varepsilon}}.
\end{equation}
For $\varepsilon=1$, \eqref{prep2forthm:exact} and Assumption~\ref{asm:x0}~(b) yield \eqref{eq:thm:exact:rate1}. For $\varepsilon\in(0,1)$, since
\[
D_{p}(x^{\dagger},x_{k+1})\leq D_{p}(x^{\dagger},x_{k})(1-cD_{p}(x^{\dagger},x_{k})^{\frac{1-\varepsilon}{1+\varepsilon}})
\]
holds by \eqref{prep2forthm:exact}, using $(1-s)^{-\beta}\geq1+\beta s$ for $0\leq s<1$ and $0\leq\beta\leq 1$ implies
\begin{equation} \label{prep3forthm:exact}
D_{p}(x^{\dagger}, x_{k+1})^{-\frac{1-\varepsilon}{1+\varepsilon}}\geq D_{p}(x^{\dagger}, x_{k})^{-\frac{1-\varepsilon}{1+\varepsilon}}+c\frac{1-\varepsilon}{1+\varepsilon}.
\end{equation}
Combining \eqref{prep3forthm:exact} with Assumption~\ref{asm:x0}~(b), we obtain \eqref{eq:thm:exact:rate2}. 
\end{proof}

\section{Convergence Analysis for Noisy Data} \label{section:noisy}
In this section, we establish the well-definedness and monotonicity properties of the LM-method for noisy data and derive an error estimate and an estimate for the stopping index $k_{\ast}$.
\begin{theorem} \label{thm:noisy}
Suppose that Assumptions~\ref{asm:XY}, \ref{asm:op}, \ref{asm:x0} and \eqref{condition:Jin32:alpha} hold. Let $r>1$. Then, $x_{k}^{\delta}$ is well-defined for $0\leq k\leq k_{\ast}$, and $k_{\ast}$ is finite. Moreover,
\begin{equation} \label{noisy:mono}
D_{p}(x^{\dagger},x_{k+1}^{\delta})\leq D_{p}(x^{\dagger},x_{k}^{\delta})
\end{equation}
for $0\leq k<k_{\ast}$ and
\begin{equation} \label{eq:thm:noisy:rate:s}
D_{p}(x^{\dagger},x_{k_{\ast}}^\delta)\leq \rho - R \tau^{p} k_{\ast} \delta^{p}
\end{equation}
holds, where $R = 2 C_{\mathrm{n}}C_{p}\mu(1-\mu)^{p-1} p^{-1} \widehat{L}^{-p}$. 
In addition, under 
\begin{equation} \label{eq:thm:noisy:kast}
\mu<1-2\widetilde{C}_{\rho},
\end{equation}
we have $k_{\ast}(\delta, y^{\delta})=O(1+|\log\delta|)$.
\end{theorem}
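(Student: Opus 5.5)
The argument runs parallel to the proof of Theorem~\ref{thm:exact}, with the discrepancy principle~\eqref{MDP} supplying the lower bound on the residual.

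\textbf{Well-definedness and monotonicity.} We argue by induction on $k$. The case $k=0$ is Assumption~\ref{asm:x0}~(b). Suppose $k<k_{\ast}$ and $x_{k}^{\delta}\in B_{\rho}(x^{\dagger},D_{p})$ is well-defined. Corollary~\ref{cor:Jin33:alpha:noisy} then gives $\alpha_{k}$ and $x_{k+1}^{\delta}$. Since \eqref{mu} holds with equality at $\alpha_{k}$, condition \eqref{condition:alpha:1} is satisfied, so Lemma~\ref{lem:Jin32:noisy} applies. Combining it with Lemma~\ref{lem:alphak} and \eqref{F'-bounded} bounds $1/\alpha_{k}$ from below, which yields
\begin{equation*}
D_{p}(x^{\dagger},x_{k+1}^{\delta})-D_{p}(x^{\dagger},x_{k}^{\delta})\leq -R\|y^{\delta}-F(x_{k}^{\delta})\|_{Y}^{p}\leq 0 .
\end{equation*}
This gives \eqref{noisy:mono} and keeps $x_{k+1}^{\delta}$ in the ball, which closes the induction.

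\textbf{Finiteness of $k_{\ast}$ and \eqref{eq:thm:noisy:rate:s}.} For $k<k_{\ast}$, \eqref{MDP} gives $\|y^{\delta}-F(x_{k}^{\delta})\|_{Y}>\tau\delta$. Summing the telescoping inequality over $k=0,\dots,m-1$ for any $m\le k_{\ast}$ gives
\begin{equation*}
0\le D_{p}(x^{\dagger},x_{m}^{\delta})\le\rho-R\tau^{p}m\delta^{p}.
\end{equation*}
For $\delta>0$ this forces $m\le\rho/(R\tau^{p}\delta^{p})$, so $k_{\ast}$ is finite. Taking $m=k_{\ast}$ gives \eqref{eq:thm:noisy:rate:s}.

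\textbf{Logarithmic bound on $k_{\ast}$.} This is the main obstacle: the count above only gives $k_{\ast}=O(\delta^{-p})$, so a contraction is needed. The first attempt is a geometric decay of the residual, $\|y^{\delta}-F(x_{k+1}^{\delta})\|_{Y}\le q\|y^{\delta}-F(x_{k}^{\delta})\|_{Y}$ with $q<1$, for $k<k_{\ast}-1$. Write
\begin{equation*}
y^{\delta}-F(x_{k+1}^{\delta})=\bigl[y^{\delta}-F(x_{k}^{\delta})-F'(x_{k}^{\delta})(x_{k+1}^{\delta}-x_{k}^{\delta})\bigr]-\bigl[F(x_{k+1}^{\delta})-F(x_{k}^{\delta})-F'(x_{k}^{\delta})(x_{k+1}^{\delta}-x_{k}^{\delta})\bigr].
\end{equation*}
The first bracket has norm $\mu\|y^{\delta}-F(x_{k}^{\delta})\|_{Y}$ by \eqref{mu}. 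The second is at most $\frac{L}{2}\|x_{k+1}^{\delta}-x_{k}^{\delta}\|_{X}^{2}\le L(\|x_{k+1}^{\delta}-x^{\dagger}\|_{X}^{2}+\|x_{k}^{\delta}-x^{\dagger}\|_{X}^{2})$. Each square is estimated as in \eqref{prep2forlem:Jin32:alpha} by $\frac{2}{L}\widetilde{C}_{\rho}\|y-F(\cdot)\|_{Y}$. Then $\|y-F(x_{j}^{\delta})\|_{Y}\le\delta+\|y^{\delta}-F(x_{j}^{\delta})\|_{Y}$, and $\delta<\tau^{-1}\|y^{\delta}-F(x_{j}^{\delta})\|_{Y}$ for $j<k_{\ast}$.

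The constants must be tracked so that the $x_{k}^{\delta}$ contribution is about $\widetilde{C}_{\rho}(1+\tau^{-1})$ and the $x_{k+1}^{\delta}$ term can be absorbed to the left-hand side. This is where \eqref{eq:thm:noisy:kast} should give $q<1$; I expect it to need the sharper splitting $\|x_{k+1}^{\delta}-x_{k}^{\delta}\|^{2}\le(\|x_{k+1}^{\delta}-x^{\dagger}\|+\|x_{k}^{\delta}-x^{\dagger}\|)^{2}$ together with the square-root form of the estimate. If the constants come out slightly off, the fallback is to accept a modified contraction factor, because only $q<1$ matters.

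With the contraction in hand, $\tau\delta<\|y^{\delta}-F(x_{k_{\ast}-1}^{\delta})\|_{Y}\le q^{k_{\ast}-1}\|y^{\delta}-F(x_{0})\|_{Y}$. This gives $k_{\ast}\le 1+\log(\|y^{\delta}-F(x_{0})\|_{Y}/(\tau\delta))/|\log q|=O(1+|\log\delta|)$, using $\|y^{\delta}-F(x_{0})\|_{Y}\le\delta+\|y-F(x_{0})\|_{Y}$.
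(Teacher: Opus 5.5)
Your first two parts are correct and follow the paper. These are the induction via Corollary~\ref{cor:Jin33:alpha:noisy}, Lemma~\ref{lem:Jin32:noisy}, Lemma~\ref{lem:alphak} and \eqref{F'-bounded}, followed by telescoping against \eqref{MDP}.

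The logarithmic bound has a genuine gap, and it is not just loose bookkeeping. Write $e_{k}=\|y^{\delta}-F(x_{k}^{\delta})\|_{Y}$. Routing the Taylor remainder through $x^{\dagger}$ as you do gives
\[
\|F(x_{k+1}^{\delta})-F(x_{k}^{\delta})-F'(x_{k}^{\delta})(x_{k+1}^{\delta}-x_{k}^{\delta})\|_{Y}\leq 2\left(1+\tau^{-1}\right)\widetilde{C}_{\rho}\,(e_{k+1}+e_{k}).
\]
Hence $q=\bigl(\mu+2(1+\tau^{-1})\widetilde{C}_{\rho}\bigr)/\bigl(1-2(1+\tau^{-1})\widetilde{C}_{\rho}\bigr)$, which is $<1$ only if $\mu<1-4(1+\tau^{-1})\widetilde{C}_{\rho}$. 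This is strictly stronger than \eqref{eq:thm:noisy:kast}, and \eqref{condition:Jin32:alpha} does not imply it either. For example, $\tau=10$, $\widetilde{C}_{\rho}=0.1$, $\mu=0.7$ satisfies both hypotheses of the theorem, yet your $q$ exceeds $1$.

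Your sharper splitting does not rescue this. The bound $(\sqrt{A}+\sqrt{B})^{2}\leq 2(A+B)$ is attained at $A=B$. Handling the cross term by Young's inequality with weight $\eta>0$ gives the threshold $\mu<1-(2+\eta+\eta^{-1})(1+\tau^{-1})\widetilde{C}_{\rho}$, which is never better than the one above. Your fallback of ``accepting a modified contraction factor'' amounts to changing the hypothesis. So the last claim is not proved under \eqref{eq:thm:noisy:kast}.

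The missing idea is to bypass $x^{\dagger}$ and apply \eqref{F;inv-Holder} directly to the pair $x_{k+1}^{\delta},x_{k}^{\delta}\in B_{\rho}(x^{\dagger},D_{p})$. In the proof of Lemma~\ref{lem:Jin32:noisy}, the term $-D_{p}(x_{k+1}^{\delta},x_{k}^{\delta})$ in \eqref{prep4forlem:Jin32:alpha} is dropped when passing to \eqref{eq:lem:Jin32:alpha}. Keeping it gives $D_{p}(x_{k+1}^{\delta},x_{k}^{\delta})\leq D_{p}(x^{\dagger},x_{k}^{\delta})\leq\rho$. Then Lemma~\ref{thm:BD;SKHK}~(d) and \eqref{F;inv-Holder}, used exactly as in \eqref{prep2forlem:Jin32:alpha}, yield
\[
\frac{L}{2}\|x_{k+1}^{\delta}-x_{k}^{\delta}\|_{X}^{2}\leq\widetilde{C}_{\rho}\|F(x_{k+1}^{\delta})-F(x_{k}^{\delta})\|_{Y}\leq\widetilde{C}_{\rho}(e_{k+1}+e_{k}).
\]
Neither $\delta$, $\tau$ nor a factor $2$ enters. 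So $e_{k+1}\leq\frac{\mu+\widetilde{C}_{\rho}}{1-\widetilde{C}_{\rho}}\,e_{k}$ for every $k<k_{\ast}$, without invoking \eqref{MDP} at index $k+1$. This factor is $<1$ exactly under \eqref{eq:thm:noisy:kast}, and your closing logarithm argument then applies verbatim.
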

\begin{remark}
The estimate~\eqref{eq:thm:noisy:rate:s} shows that, starting from the initial bound~$\rho$, the upper bound on the Bregman distance is improved at the stopping index $k_{\ast}$. A result corresponding to that obtained for the Landweber iteration in \cite{MG} can also be derived. Indeed, for a given $\delta>0$, if $\rho\leq C\delta^{p}$ for some $C>0$, then \eqref{eq:thm:noisy:rate:s} yields $D_{p}(x^{\dagger},x_{k_{\ast}}^\delta)\leq C'\delta^{p}$, where $C' = C - R \tau^{p}k_{\ast}$. 
\end{remark}
\begin{proof}[Proof of Theorem~\ref{thm:noisy}]
By the same argument as in the proof of Theorem~\ref{thm:exact}, using Corollary~\ref{cor:Jin33:alpha:noisy}, it follows by induction that $x_{k}^{\delta}$ is well-defined and belongs to $B_{\rho}(x^{\dagger},D_{p})$ for $0\leq k\leq k_{\ast}$. In particular, \eqref{noisy:mono} holds for $0\leq k< k_{\ast}$.

Let us show that the LM-method terminates after finitely many iterations. Note that Lemma~\ref{lem:Jin32:noisy} and the definition of $\alpha_{k}$ imply that \eqref{eq:lem:Jin32:alpha} holds with $\alpha=\alpha_k$. It follows from \eqref{eq:lem:Jin32:alpha} with $\alpha=\alpha_{k}$, \eqref{eq:lem:alphak} and \eqref{F'-bounded} that
\begin{equation} \label{prep1forthm:noisy}
R \|y^{\delta}-F(x_{k}^{\delta})\|_{Y}^{p}\leq D_{p}(x^{\dagger},x_{k}^{\delta})-D_{p}(x^{\dagger},x_{k+1}^{\delta}).
\end{equation}
Combining \eqref{MDP}, \eqref{prep1forthm:noisy}, Assumption~\ref{asm:x0}~(b) and Lemma~\ref{thm:BD;SKHK}~(a), we obtain
\begin{align*} 
R (\tau\delta)^{p}k_{\ast}
& <R\sum_{k=0}^{k_{\ast}-1}\|y^{\delta}-F(x_{k}^{\delta})\|_{Y}^{p} \\
& \leq D_{p}(x^{\dagger},x_{0})-D_{p}(x^{\dagger},x_{k_{\ast}}^{\delta})
\leq\rho-D_{p}(x^{\dagger},x_{k_{\ast}}^{\delta}) \leq \rho 
\end{align*}
and hence we obtain 
$k_{\ast}<\rho R^{-1}(\tau\delta)^{-p}<\infty$ 
and \eqref{eq:thm:noisy:rate:s}.

Finally, we derive an estimate for $k_{\ast}$. From \eqref{prep4forlem:Jin32:alpha} and Lemma~\ref{thm:BD;SKHK}~(a), together with \eqref{noisy:mono} and Assumption~\ref{asm:x0}~(b), we obtain
\begin{equation} \label{prep3forthm:noisy}
D_{p}(x_{k+1}^{\delta},x_{k}^{\delta})\leq D_{p}(x^{\dagger},x_{k}^{\delta})\leq D_{p}(x^{\dagger},x_{0})\leq\rho.
\end{equation}
Since $x_{k}^{\delta}, x_{k+1}^{\delta}\in B_{\rho}(x^{\dagger}, D_{p})$, by \eqref{prep3forthm:noisy} and the same argument as in Lemma~\ref{lem:Jin32:noisy}, we have 
\begin{align} \label{prep4forthm:noisy}
&\left\|F(x_{k+1}^{\delta})-F(x_{k}^{\delta})-F'(x_{k}^{\delta})(x_{k+1}^{\delta}-x_{k}^{\delta})\right\|_{Y} \nonumber \\
&\leq \widetilde{C}_{\rho} \left\|F(x_{k+1}^{\delta})-F(x_{k}^{\delta})\right\|_{Y}
\leq\widetilde{C}_{\rho}\left(\left\|y^{\delta}-F(x_{k+1}^{\delta})\right\|_{Y}+\left\|y^{\delta}-F(x_{k}^{\delta})\right\|_{Y}\right).
\end{align}
It follows from \eqref{mu} and \eqref{prep4forthm:noisy} that
\begin{align} \label{prep5forthm:noisy}
&\mu\left\|y^{\delta}-F(x_{k}^{\delta})\right\|_{Y} \nonumber \\
&\geq\left\|y^{\delta}-F(x_{k+1}^{\delta})\right\|_{Y}-\left\|F(x_{k+1}^{\delta})-F(x_{k}^{\delta})-F'(x_{k}^{\delta})(x_{k+1}^{\delta}-x_{k}^{\delta})\right\|_{Y} \nonumber \\
&\geq(1-\widetilde{C}_{\rho})\left\|y^{\delta}-F(x_{k+1}^{\delta})\right\|_{Y}-\widetilde{C}_{\rho}\left\|y^{\delta}-F(x_{k}^{\delta})\right\|_{Y}.
\end{align}
Iterating \eqref{prep5forthm:noisy} and using \eqref{MDP} yield
\begin{equation} \label{prep6forthm:noisy}
\tau\delta<\left\|y^{\delta}-F(x_{k_{\ast}-1}^{\delta})\right\|_{Y}\leq\left(\frac{\mu+\widetilde{C}_{\rho}}{1-\widetilde{C}_{\rho}}\right)^{k_{\ast}-1}\left\|y^{\delta}-F(x_{0})\right\|_{Y}.
\end{equation}
By \eqref{eq:thm:noisy:kast}, $(\mu+\widetilde{C}_{\rho})/(1-\widetilde{C}_{\rho})<1$, while \eqref{noiselevel} gives $\left\|y^{\delta}-F(x_{0})\right\|_{Y}\leq\delta+\left\|y-F(x_{0})\right\|_{Y}$. Hence, taking logarithms in \eqref{prep6forthm:noisy} completes the proof.
\end{proof}

\bibliographystyle{abbrv}
\bibliography{ref}

\bigskip \noindent
Akari Ishida \\
Graduate School of Science, University of Hyogo, \\
2167 Shosha, Himeji, Hyogo 671-2201, Japan \\
Graduate School of Mathematics, Nagoya University, \\
Furocho, Chikusa-ku, Nagoya, Aichi 464-8602, Japan \\
E-mail: \texttt{akari.ishida@sci.u-hyogo.ac.jp} 

\bigskip \noindent
Sei Nagayasu \\
Graduate School of Science, University of Hyogo, \\
2167 Shosha, Himeji, Hyogo 671-2201, Japan \\
E-mail: \texttt{sei@sci.u-hyogo.ac.jp}
\end{document}